\documentclass[12pt,twoside]{article}

\usepackage[dvips]{graphics}
\usepackage{amsmath,amsfonts,amssymb,amsthm}
\usepackage{epsfig}%fullpage}
\usepackage{xspace}
\usepackage{color}

\setlength{\textheight}{21.6cm}
\setlength{\textwidth}{13.5cm}
\setlength{\oddsidemargin}{1cm}
\setlength{\evensidemargin}{1cm}

\thispagestyle{empty}
%\markboth{\small{ }}{\small{   }}

\newcommand{\dt}{\Delta t}
\newcommand{\dx}{\Delta x}
\newtheorem{thrm}{Theorem}
\newtheorem{lemma}{Lemma}
\newtheorem{defn}{Definition}

\newcommand{\mat}{\textsc{Matlab}\xspace}

\def\N{\mathbb N}

\def\R{\mathbb R}

\begin{document}
\title{Numerical convergence of a one step approximation of an
intrgro-differential equation}
\author{Samir Kumar Bhowmik~\footnote{The Author would like to thank
Professor Dugald B. Duncan for his valuable advise and kind help.
%The research was funded by MACS award,
%Heriot-Watt University, Edinburgh, UK and study leave grant,
%University of Dhaka, Dhaka, Bangladesh.
}
\\
 %\address{
 KdV Institute for Mathematics, University of Amsterdam\\
Amsterdam, Netherlands, S.K.Bhowmik@uva.nl\\ and \\
Department of Mathematics, University of Dhaka\\ Dhaka 1000, Bangladesh,
%\email{
bhowmiksk@gmail.com.}
%-----------------------
%\thanks
%
%\subjclass[2010]{Primary}
%
%
%\date{10/11/2009}
%
%
\maketitle

\begin{abstract}
We consider a linear partial integro-differential equation that arises in the modeling of various physical and biological processes. We study the problem in a spatial periodic domain. We analyze numerical stability and numerical
convergence of a one step approximation of the problem with smooth and non-smooth initial functions.
\end{abstract}
\textbf{Keywords:} stability; convergence; smoothness; full discrete problem;
infinite domain; periodic domain.
%\tableofcontents
%
\section{Introduction}
%Many problems from chemical, physical, mechanical, biological and other systems have been modeled by reaction diffusion equation and advection reaction diffusion equations. Problems with a standard Laplacian type diffusion operator (local problems) are well known~\cite{E.Gekeler,  B.Jovanovic, H.Kress,  Str, H.Talezer}. A lot of problems contain a nonlocal~\cite{A.Chmaj, C.Fife, SKB02, F.Chen, Dug, Troy, Troy1} diffusion operator as well as  both~\cite{J.C.Louis}.  These type of models are typically complicated, interesting to the scientists, challenging to understand and to analyze.

Study in convolution model of phase transitions (initial value problem) is of ongoing
interest. Many scientific problems  have been modeled by reaction diffusion equation and advection reaction
 diffusion equations. A lot of models consider nonlocal type diffusion
 operators~\cite{A.Chmaj, C.Fife, SKB02, SamirKumarBhowmik02, Dug, Troy}. % and a lot about numerical analysis yet to be done.
In this article we analyze stability, accuracy and rate of convergence of a simple approximation of such a linear partial integro-differential equation (IDE)
\begin{equation}\label{f:find-linearide01}
   u_{t}(x,t)=\varepsilon \int_{\Omega} J(x-y)\left( u(y,t)-u(x,t)\right) dy = \mathbb{L}u,
\end{equation}
which is the linear part of the IDE~\cite{A.Chmaj, C.Fife, PCFphase02, F.Chen, Dug1, Dug,
phase03, CFphase}
\begin{equation}\label{int01:f}
u_{t}=\varepsilon \left( \int_{\Omega}\ J(x-y) u(y,t)dy-u(x,t)
 \int_{\Omega}\ J(x-y)dy\right)+f(u).
\end{equation}
Here the initial condition
$u(x,0)=u_{0}(x), x\in \Omega$
where $\Omega \subseteq \mathbb{R}$, %$(d=1, 2, 3),$
 $f(u)$ is a bistable nonlinearity for the associated ordinary differential equation
$u_{t}=f(u) $
and $J(x-y)$ is a kernel that measures interaction between particles at
positions $x$ and at $y$. Here it is assumed that the effect of close neighbours
$x$ and $y$ is greater than that from more distant ones;
the spatial variation is incorporated in $J.$
We assume that $J$ is a non-negative function satisfies smoothness, symmetry and decay conditions.
%Figure~\ref{ker01:fig} shows two sample kernel functions.
%------------------------------------------------------------
%------------------------------------------------------------
%\section{}\label{periodic01}
%%%%%%%%%%%%%%%%%%%%%%%%%%%%%%%%%%%%%%%%%%%%%%%%%%%%%%%%%%%

We present the stability and convergence of the rescaled integro-differential equation model in a periodic domain. Here
we consider spacial $[0, 1]$ periodic domain.
%The notation used here does not mean the same as in the previous part when
%analyzing the infinite domain problem.
For simplicity of notations, from here $J$, $u$ mean functions in the periodic domain  and
$J^{\infty}$, $u^{\infty}$ mean functions  defined in the infinite domain.
 %Consider a kernel $J^{\infty}$ for the infinite-domain problem and $J$ for the finite one-periodic domain version of the problem.
 If we choose spatially  one-periodic
initial data $u(x,0)$, then for all $x\in \mathbb{R}$ and $t\in\mathbb{R_+}$
\[
u(x,t)=u(x+1,t).
\]
Then a with kernel function
$ J^{\infty}(x)$,  %=\sqrt{\frac{100}{\pi}} e^{-100 x^2}$,
(\ref{f:find-linearide01}) can be written as
\begin{eqnarray}\label{f:periodic01}
u_{t}&=& \varepsilon \int_{\mathbb{R}}J^{\infty} (x-y) \left(
u(y,t) - u(x,t)\right)dy\nonumber\\
%&=&\varepsilon \sum_{r=-\infty}^{\infty}
%\int_{r}^{r+1}J^{\infty}(x-y) \left(
%u(y,t) - u(x,t)\right)dy\nonumber\\
%
%&=&\varepsilon \sum_{r=-\infty}^{\infty}
%\int_{0}^{1}J^{\infty} (x-z-r) \left(
%u(z+r,t) - u(x,t)\right)dz\nonumber\\
&=&\varepsilon\int_{0}^{1}J(x-z)\left(u(z,t)-u(x,t)\right)dz,
\end{eqnarray}
where
\begin{equation}\label{f:perdJ}
J(x)= \sum_{r=-\infty}^{\infty} J^{\infty} (x- r)
\end{equation}
and $x\in [0, 1].$
%Using this
%kernel we need only solve the linear part of  1-periodic problem (\ref{int01:f}) ($f(u)=0$)
%on the interval
%$\Omega=[0, 1].$
%
%
%We consider the linear part
%\begin{equation}\label{f:find-linearide01}
%u_{t}(x,t)=\int_{0}^{1} J(x-y)\left( u(y,t)-u(x,t)\right) dy
%\end{equation}
 %where $\Omega = [0, 1]$, $u(t_0) = u_0 $ in our investigation, $J(x)$ is defined above.
 Two sample kernels are shown in
Figure~{\ref{ker01a:fig}}.  Here we consider $\varepsilon = 1$ only since it does not affect our analysis.
\begin{figure}[t]
     \begin{center}
       {\bf (a) \hspace{5.5cm} (b)}\\
       \includegraphics[width=0.49\textwidth,height=6.5cm]{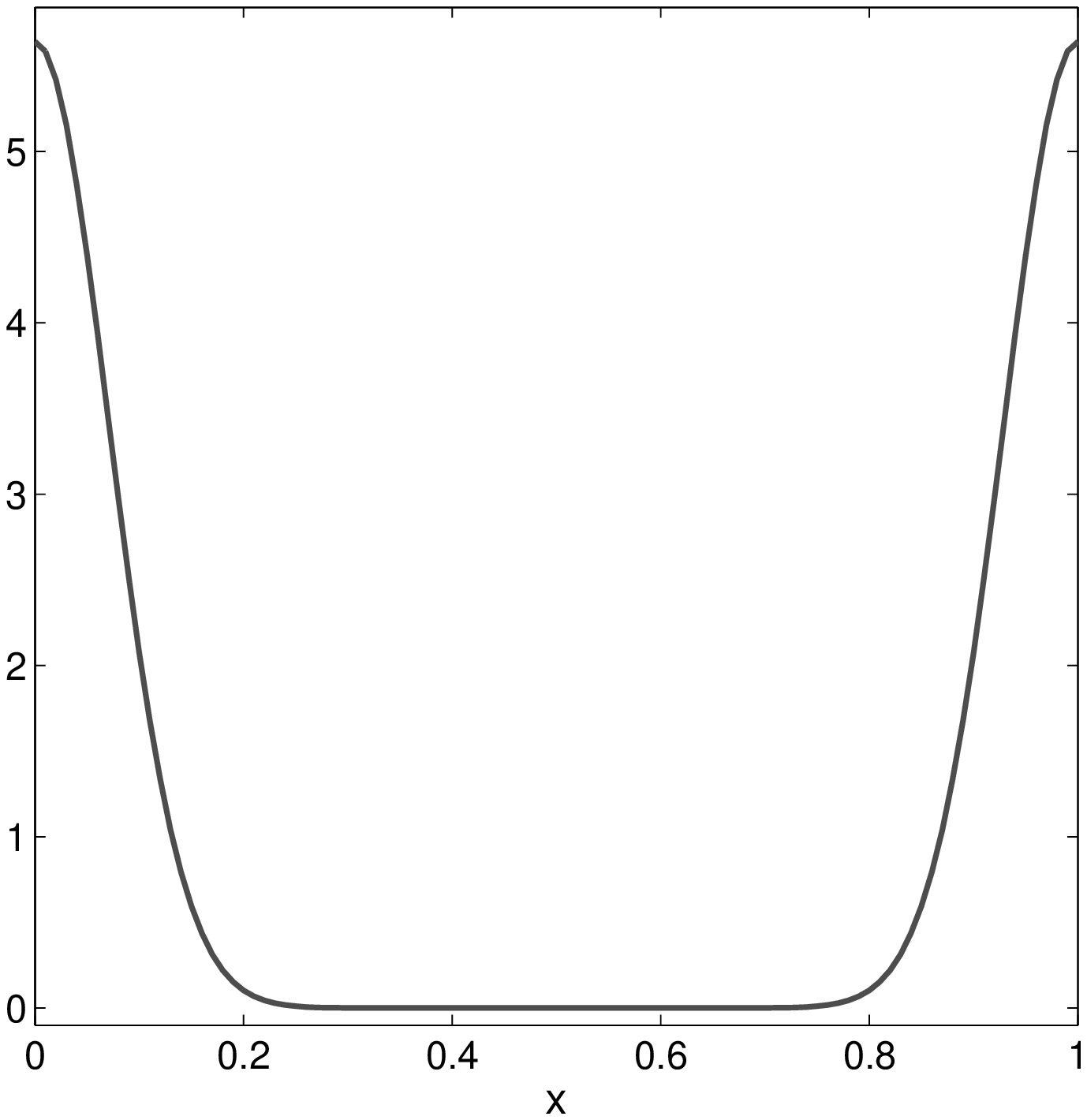}
       \includegraphics[width=0.49\textwidth,height=6.5cm]{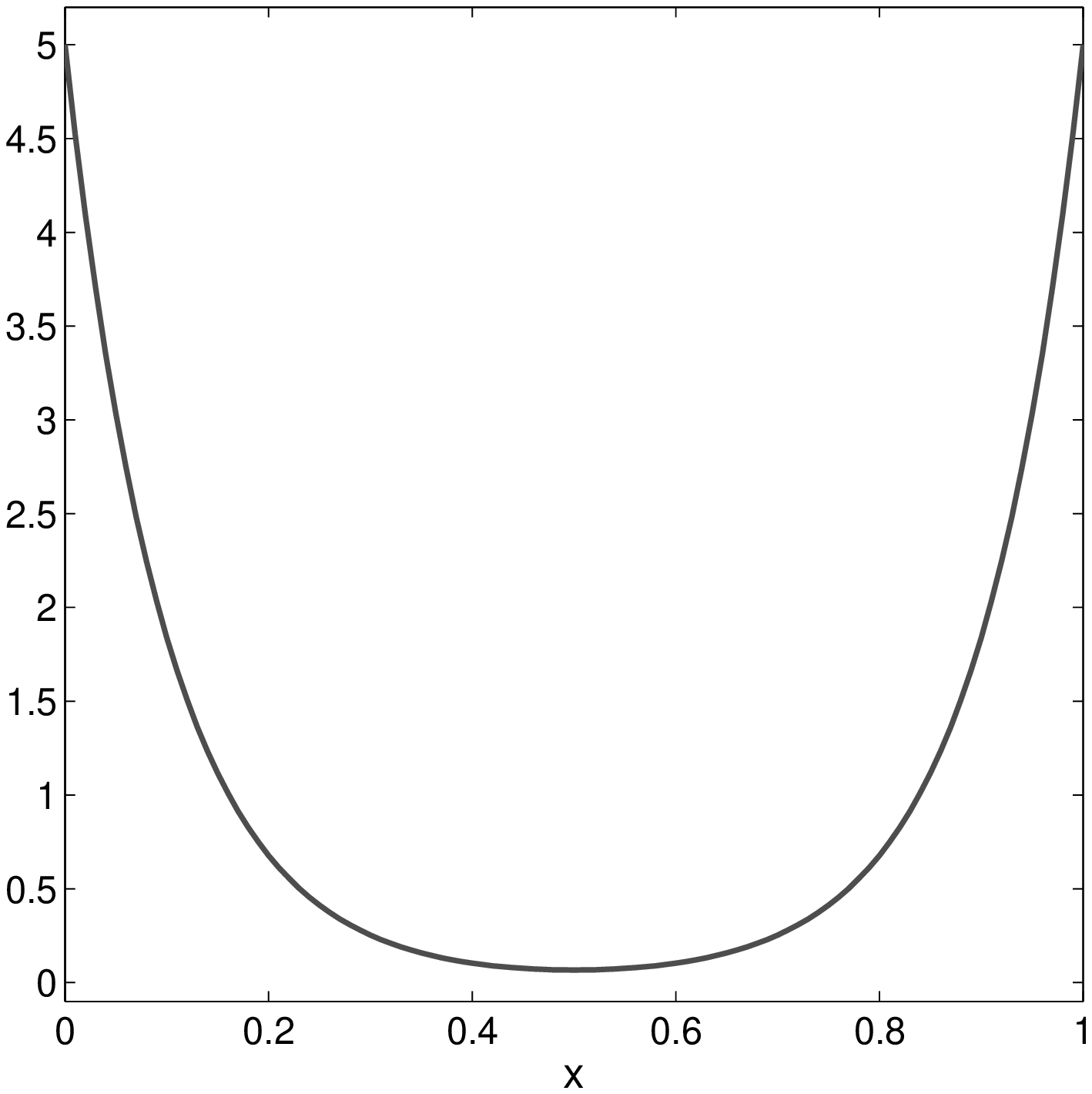}
      \end{center}
     \caption{Examples of kernel functions in $[0, 1]$ periodic domain defind by (\ref{f:perdJ}) where  (a) $J^{\infty}(x)=\sqrt{\frac{100}{\pi}} e^{-100 x^2}$, (b) $J^{\infty}(x)=\frac{c}{2} e^{-c|x|}$, $c=10$.
}
     \label{ker01a:fig}
    \end{figure}

In \cite{SamirKumarBhowmik02}, the author study stability and accuracy of an one step approximation the IDE considering infinite spatial domain. They  first write a discrete equivalent of the integral equation, then use the forward Euler method for time discretisation. They present some numerical results to demonstrate the rate of convergence for smooth and non-smooth initial functions.

In \cite{Dug}, Duncan et. al. consider (\ref{int01:f}) in a spatial periodic $[0, 1]$ and $[0, 1]^2$ domains. They approximate the problem using piecewise constant basis
function with collocation and mid point quadrature rule for space discretisation, then they use some standard ode solver for time integration. They present some numerical results to demonstrate their scheme.
In \cite{DJD001}, the author consider several local and nonlocal operators that contains (\ref{f:find-linearide01}). He approximate the models using finite difference schemes and discuss some stability issues.

Here we consider the problem  (\ref{f:periodic01}) in a spatial periodic domain, and study stability and accuracy of a simple scheme using the Fourier series and the discrete Fourier transform definitions.
We Organize our article in the following way. We present a simple
space time discretisation in Section~2 followed by stability analysis of the scheme in Section~3.
We analyze accuracy of the scheme in Section~4 considering smooth initial function,
whereas in Section~5, we discuss the same considering non-smooth initial function.
We study  accuracy
of the semi-discrete time dependent scheme in Section~6. We finish our study in Section~7 with some
numerical experiments and discussions.
%\textbf{ }\\ time
%{\large{\textbf{}}}\\
%Consider the equation (\ref{f:find-linearide01})
%with initial condition $u(t_0)=u_0$.% and $J$ is given by (\ref{f:perdJ}).
%
\section{Numerical approximation}
We approximate (\ref{f:find-linearide01}) as
\begin{equation}\label{period02:f}
\frac{dU_{j}(t)}{dt}=h \sum_{k=0}^{N-1} J(x_j-x_k)(U_k-U_j)
\end{equation}
for each $j=0, 1, 2,\cdots,N-1$ where $U_{j}(t)\approx u(x_j,t)$ and $x_j=j h.$
Applying Euler's method we can again approximate the semi-discrete
problem (\ref{period02:f}) as
\[%begin{equation}
U_{j}^{n+1}-U_{j}^{n}= \dt h \sum_{k=0}^{N-1} J(x_j-x_k)
\left( U_{k}^{n}-U_{j}^{n} \right)
\]%end{equation}
\begin{equation}\label{period03:f}
U_{j}^{n+1}=U_{j}^{n}\left(1-
 h \dt  \sum_{r=0}^{N-1} J(x_j-x_{r})\right)+
h \dt \sum_{r=0}^{N-1} J(x_j-x_{r})U_{r}^{n}.
\end{equation}

We use Fourier series and discrete
Fourier transforms throughout. Before the main discussion let us introduce some necessary definitions and theorems.
%
%
%%%%%%%%%%%%%%%%%%%%%%%%%%%%%%%%%%%%%%%%%%%%%%%%%%%%%%%%%%%%%%
%\subsection{Fourier Transform definitions and theorems}
%%%%%%%%%%%%%%%%%%%%%%%%%%%%%%%%%%%%%%%%%%%%%%%%%%%%%%%%%%%%%%
%
%\begin{defn}\label{pdefn01}
The Discrete Fourier Transform for a periodic function (DFT) can be
 defined as
\[
\tilde U_k=h\sum_{j=0}^{N-1} u(x_j)e^{{-i2\pi kx_j}}
\]
 where $u(x,t)$ is a periodic function with period $1$
and its inverse Fourier transform is defined as
\[
u_j= h\sum_{k=0}^{N-1} \tilde U_k e^{{i2 \pi kx_j}}
%\]
\quad \text{or} \quad
%\[
u_j= h\sum_{k=-\frac{N}{2}+1}^{\frac{N}{2}} \tilde U_k e^{{i2\pi kx_j}}.
\]
For any function periodic function $f(x)$ with period $1,$ its Fourier
series can be defined as
\[
f(x)=\sum_{n=-\infty}^{\infty} \hat f_n e^{{i2\pi nx}}
%\]
\quad \text{where}
%
%\[
\quad
\hat f_n= \int_{0}^{1} f(x)e^{{-i2\pi nx}}dx.
\]
For any $1$-periodic  complex-valued function $u(x)$ the relation
\[
\sum_{n=-\infty}^{\infty} |\hat u_n|^2=\int_0^{1} |u(x)|^2 dx
\]
where $\hat u_n$ are the Fourier coefficients of $u(x)$
 and
 for a real-valued function $u(x)$
\[
\frac{\hat u_0 ^2}{4} +\frac{1}{2}\sum_{n=1}^{\infty} \left( \hat u_{-n} ^2 + \hat u_n ^2\right)=
\int_0^{1} u^2(x) dx
\]
is called Parseval's relation for the Fourier series where
$a_n$, $b_n$ are Fourier coefficients.
 For the Discrete
Fourier Transform of $u(x)$, the relation
\[
\sum_{j=0}^{N-1}|u_j|^2=h \sum_{k=0}^{N-1}|\tilde U_k|^2 \quad or \quad
\mbox{ $h \sum_{k={-\frac{N}{2}+1}}^{\frac{N}{2}}|\tilde U_k|^2$}
\]
is called Parseval's relation where $\tilde U_k $ is the DFT of
 $u_j,$  both have the
same length $N.$
%\end{defn}
%

Now we will discuss the relation between the DFT and the Fourier coefficients.
%%%%%%%%%%%%%%%%%%%%%%%%%%%%%%%%%%%%%%%%%%%%%
%%%%%%%%%%%%%%%%%%%%%%%%%%%%%%%%%%%%%%%%%%%%%
%%%%%%%%%%%%%%%%%%%%%%%%%%%%%%%%%%%%%%%%%%%%%
From the DFT definition
$%\[
\tilde U_k = h\sum_{j=0}^{N-1} u(jh)e^{{-i2\pi k x_j}},
$   %\]
the
Fourier coefficients are
\[
\hat u_k= \int_{0}^{1} u(x)e^{{-i2\pi k x}}dx
\]
%\quad
 \text{and the corresponding Fourier series is}
%\quad
\[
u(x)=\sum_{k=-\infty}^{\infty} \hat u_k e^{{i2\pi k x}}.
\]
Thus
\begin{eqnarray}\label{poisson:f}
\tilde U_k &=&h\sum_{j=0}^{N-1} \sum_{r=-\infty}^{\infty}
\hat u_r e^{{i2\pi r x_j}}
 e^{{-i2\pi k x_j}} \nonumber\\
%&=&h\sum_{j=0}^{N-1} \sum_{r=-\infty}^{\infty}
%\hat u_r e^{{i2\pi (r-k) x_j}}
% \nonumber\\
%&=& h\sum_{r=-\infty}^{\infty} \hat u_r  \left [
%\sum_{j=0}^{N-1} e^{{i2\pi h j(r-k)}}
%\right ]
%\nonumber\\
&=& h \sum_{r=-\infty}^{\infty} \hat u_r N \delta_N (r-k)%\nonumber\\
= \sum_{m=-\infty}^{\infty} \hat u_{k+mN}
\end{eqnarray}
where $r-k=mN$ and
\begin{equation}\label{def:deltafunc}
\delta_N (j)=\left\{
              \begin{array}{ll}
               1 & \mbox{$j=mN$ for some $m\in \mathbb{Z}$}\\
               0 & \mbox{otherwise.}
               \end{array}
             \right.
\end{equation}
The relation (\ref{poisson:f}) is known as the
discrete Poisson sum formula~\cite{Str}.
%(which is similar to the result in Theorem~\ref{thm01}).
The Fourier coefficients of the kernel function in $[0, 1]$ are given by
%\[
%\hat J_j = \int_{0}^{1} J(x)e^{{-i2\pi j x}}dx.
%\]
%
%
%Now
%\begin{eqnarray*}
%\hat J_j&=& \int_{0}^{1}
%\sum_{k=-\infty}^{\infty} J^{\infty}(x+k) e^{{-i2\pi j x}}dx
%% = \int_{0}^{1}
%%\sum_{k=-\infty}^{\infty} J^{\infty}(x+k)
%%e^{{-i2\pi j (x+k)}}dx
%\\
%&=&
%%\sum_{k=-\infty}^{\infty} \int_{k}^{(k+1)}
%% J^{\infty}(y)
%%e^{{-i2\pi j y}}dx
%%=
%  \int_{-\infty}^{\infty} J^{\infty} (y) e^{{-i2\pi j y}}dy.
%%\frac{1}{2L} \hat J^{\infty}(\frac{2\pi j}{2L}).
%\end{eqnarray*}
%
%Thus
\begin{equation}\label{Pnprela:f}
\hat J_j=\int_{0}^{1} J(x)e^{{-i2\pi j x}}dx= \int_{-\infty}^{\infty} J^{\infty} (y) e^{{-i2\pi j y}}dy
%\frac{1}{2L} \hat J^{\infty}(\frac{2\pi j}{2L})
=\hat J^{\infty} \left ({2\pi j}\right )
\end{equation}
  is the relation between the Fourier coefficient of $J(x)$ defined in (\ref{f:perdJ})
  and the continuous Fourier transform of $J^{\infty}(x)$.
%===================================================================
%\subsection{Discretisation}% and stability analysis}
%===================================================================

Multiplying both sides of (\ref{period03:f}) by $h e^{{-i2\pi k x_j}}$
and summing over $j$
\begin{eqnarray*}
h\sum_{j=0}^{\N-1} U_j^{n+1} e^{{-i2\pi k x_j}}&=&
h\sum_{j=0}^{N-1} U_j^{n} e^{{-i2\pi k x_j}}\left (
1-h\dt \sum_{r=0}^{N-1} J(x_j-x_{r})\right )\\
&&+ h \sum_{r=0}^{N-1} e^{{-i2\pi k x_j}} U_{r}^n\left (
h \dt \sum_{j=0}^{N-1} J(x_j-x_{r})e^{{-i2\pi k x_j}}
\right ).
\end{eqnarray*}
That is,
\begin{eqnarray*}
%%&&<=>\\    %\Leftrightarrow\\
\tilde U_k^{n+1}&=&\tilde U_k^{n} \left (
1-h\dt \sum_{r=0}^{N-1} J(x_j-x_{r})+ h \dt \sum_{j=0}^{N-1} J(x_j-x_{r}) e^{{-i2\pi k x_j}}
\right )\\
&=& g(h,\dt,k)\tilde U_k^{n}
\end{eqnarray*}
and clearly
\begin{equation}\label{period04:f}
\tilde U_k^{n}=g^n (h,\dt,k) \tilde U_k ^0,
\end{equation}
where
\begin{eqnarray*}
g(h,\dt,k)&=&
1+h \dt \sum_{j=0}^{N-1} J(x_j-x_{r}) \left(
e^{{-i2\pi k x_j}}-1\right)%\\
%&=&
%1+h \dt \sum_{s=0}^{N-1} J(x_s)\left(
%e^{{-i2\pi k x_s}}-1\right)\\
= 1+\dt \left( \tilde J_k-\tilde J_0 \right)
\end{eqnarray*}
since $J$ is $1$-periodic. % with period $1$.

%%%%%%%%%%%%%%%%%%%%%%%%%%%%%%%%%%%%%%%%%%%%%%%%%%%%%%%%
%%%%% Find some bounds for DFT(J) and g%%%%%%%%%%%%%%%%
%%%%%%%%%%%%%%%%%%%%%%%%%%%%%%%%%%%%%%%%%%%%%%%%%%%%%%%
\section{Stability analysis}
To show the stability of the scheme we need some reasonable restrictions on $J(x)$.
We will examine properties of the DFT  of $J(x)$ under some reasonable
hypothesis on the function $J(x)$.  We use the bounds obtained below to get
a bound on $g(h, \dt, k).$
\begin{lemma}\label{periodlemma01}
Assume that
\begin{description}
\item[A1.] $J(x) \ge 0.$
\item[A2.] $J(x)=J(-x).$
\item[A3.]  $\int_{\Omega} J(x) dx = 1.$
\item[A4.]  $\frac{d}{dx} J(x) < 0$ for  $x \in (0, \frac{1}{2})$
\item[A5.]  $\hat J_k \ge 0$ for  $-\frac{N}{2}+1 \le k \le \frac{N}{2}$
\end{description}
then  $0 \le \tilde J_0-\tilde J_k \le 2$, $\hat J_0-\hat J_k \le 2$ where $-\frac{N}{2}+1 \le k \le \frac{N}{2}$. Furthermore,
$\tilde J_k \ge 0.$
\end{lemma}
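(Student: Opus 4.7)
The plan is to work directly from the DFT definition
\[
\tilde J_k = h\sum_{j=0}^{N-1} J(x_j)\,e^{-i2\pi k x_j},
\]
the Fourier coefficient formula $\hat J_k=\int_0^1 J(x)e^{-i2\pi k x}\,dx$, and the discrete Poisson summation relation $\tilde J_k=\sum_{m\in\Z}\hat J_{k+mN}$ from (\ref{poisson:f}), reading off each inequality from a single one of the standing hypotheses A1--A5.

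First I would use the symmetry $J(x)=J(-x)$ (A2) together with $1$-periodicity to cancel the imaginary parts pairwise, turning the DFT into a real cosine sum
\[
\tilde J_0-\tilde J_k = h\sum_{j=0}^{N-1}J(x_j)\bigl(1-\cos(2\pi k x_j)\bigr).
\]
The non-negativity $\tilde J_0-\tilde J_k\ge 0$ is then immediate, since A1 gives $J(x_j)\ge 0$ and $1-\cos\theta\ge 0$ trivially. For the upper bound I would use $1-\cos\theta\le 2$ to obtain $\tilde J_0-\tilde J_k\le 2\tilde J_0$, and then control the Riemann sum $\tilde J_0=h\sum_j J(x_j)$ by the value of the integral $\int_0^1 J(x)\,dx=1$ from A3, comparing the two via the monotonicity of $J$ on $(0,\frac{1}{2})$ guaranteed by A4 together with the symmetry A2. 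The analogous continuous bound $\hat J_0-\hat J_k\le 2$ is simpler: A3 gives $\hat J_0=1$, while A5 gives $\hat J_k\ge 0$, so in fact $\hat J_0-\hat J_k\le 1$.

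For the last claim $\tilde J_k\ge 0$ I would invoke the Poisson identity $\tilde J_k=\sum_{m\in\Z}\hat J_{k+mN}$: A5 tells us every $\hat J_{k+mN}$ on the right is non-negative (reading A5 as a structural property of the kernel $J$, extending to all integer indices by the same argument), so the sum is non-negative as well.

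The main obstacle is the upper bound $\tilde J_0-\tilde J_k\le 2$, because $\tilde J_0$ is only a Riemann-sum approximation to $\int_0^1 J\,dx=1$ and can exceed $1$ for peaked kernels on coarse grids. The argument needs A4 (monotonicity) together with A2 (symmetry) to pin this Riemann sum down; everything else in the lemma reduces to a one-line application of the assumptions.
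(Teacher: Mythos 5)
Your overall route is the same as the paper's: symmetry (A2) reduces $\tilde J_0-\tilde J_k$ to the real cosine sum $h\sum_j J(x_j)\bigl(1-\cos(2\pi k x_j)\bigr)$, non-negativity follows from A1, the upper bound is to come from $1-\cos\theta\le 2$ plus a Riemann-sum-versus-integral comparison using A3 and A4, and $\tilde J_k\ge 0$ comes from the Poisson relation (\ref{poisson:f}) and A5. Your treatment of $\hat J_0-\hat J_k\le 2$ (A5 gives $\hat J_k\ge 0$, A3 gives $\hat J_0=1$, hence in fact a bound of $1$) is cleaner and more explicit than the paper, which only points to \cite{SKB02}; likewise you actually supply the Poisson-summation argument for $\tilde J_k\ge 0$, which the paper's proof omits entirely.

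However, the one step you yourself single out as the main obstacle is not closed, and as you have set it up it cannot be: you bound $\tilde J_0-\tilde J_k\le 2\tilde J_0$ and then need $\tilde J_0\le 1$, but, as you correctly observe, $\tilde J_0=h\sum_j J(x_j)$ can exceed $1$ for a peaked kernel on a coarse grid (for the periodised Gaussian of Figure~\ref{ker01a:fig}(a) with $N=4$ one already has $hJ(0)>1$). Monotonicity does not rescue this: for a function decreasing on $(0,\tfrac12)$, a sum that includes the left endpoint $x_0=0$ \emph{over}estimates the integral. The missing observation is that the $j=0$ term of $h\sum_j J(x_j)\bigl(1-\cos(2\pi kx_j)\bigr)$ vanishes identically because $1-\cos 0=0$, so the large value $hJ(0)$ never enters the difference; applying $1-\cos\le 2$ only to the terms with $j\ge 1$ and folding by symmetry leaves $4h\sum_{j=1}^{N/2}J(x_j)$, a right-endpoint sum of the decreasing function $J$ on $[0,\tfrac12]$, which A4 genuinely bounds by $4\int_0^{1/2}J\,dx=2$ using A2 and A3. (The paper's own displayed chain keeps the $r=0$ term and asserts $4h\sum_{r=0}^{N/2}J(x_r)\le 4\int_0^{1/2}J\,dx$, which has the same defect; the repair above is what makes the lemma true.)
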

\begin{proof}
We have
\[
\tilde J_k= h\sum_{r=0}^{N-1} J(x_r) e^{-{ik2\pi x_r}}
%\]
\quad \text{and} \quad
%\[
\tilde J_0= h\sum_{r=0}^{N-1} J(x_r).
\]
So
\[
\tilde J_0-\tilde J_k=
h\sum_{r=0}^{N-1} J(x_r) \left (1-e^{-{ik2\pi x_r}}\right).
\]
For simplicity from here we take $N$ to be even.
Using the symmetry of $J(x)$ in $[0,  1]$
\[
\tilde J_0-\tilde J_k=
 2 h\sum_{r=0}^{\frac{N}{2}} J(x_r)
\left ( 1-\cos ({k2\pi x_r})\right )\ge 0
\]
as $h>0$ and $J\ge 0.$
Also $1-\cos ({k2\pi x_r})\le 2.$
Thus
\[
\tilde J_0-\tilde J_k=
 4 h\sum_{r=0}^{\frac{N}{2}} J(x_r)
\le 4\int_0^{\frac{1}{2}} J(x)dx=2\int_0^{1} J(x)dx = 2.
\]
So we conclude
$ %\[
0 \le \tilde J_0-\tilde J_k\le 2.
$   %\]
 The result $\hat J_0-\hat J_k \le 2$  follows from the definition of $J$ and
 by using similar steps to those of \cite{SKB02}.
\end{proof}
\begin{lemma}\label{periodlemma02}
If $J(x)$ satisfies \textbf{A1}~-~\textbf{A3} then
$%\[
1 \ge g(h, \dt, k)\ge 1-C \dt
$ %\]
for some $C>0$ and if $J(x)$ satisfies \textbf{A4}~-~\textbf{A5} from
Lemma~\ref{periodlemma01} as well then $C=2$ and
\[
|g(h,\dt,k)|\le 1 \quad \mbox{for all} \quad \mbox{$0<\dt\le \dt^*=\frac{2}{C}.$}
\]
\end{lemma}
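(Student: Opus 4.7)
The plan is to translate both conclusions into bounds on the quantity $\tilde J_0 - \tilde J_k$, since the formula derived at the end of Section~2 gives
\[
g(h,\dt,k) = 1 + \dt(\tilde J_k - \tilde J_0) = 1 - \dt(\tilde J_0 - \tilde J_k).
\]
Thus $g\le 1$ is equivalent to $\tilde J_0 - \tilde J_k \ge 0$, the lower bound $g\ge 1 - C\dt$ is equivalent to $\tilde J_0 - \tilde J_k \le C$, and $|g|\le 1$ is equivalent to $0\le \dt(\tilde J_0 - \tilde J_k)\le 2$. With this dictionary in hand the lemma reduces to estimates on a single real number.

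For the first assertion under only \textbf{A1}--\textbf{A3}, I would mimic the opening of the proof of Lemma~\ref{periodlemma01}: use the symmetry \textbf{A2} to cancel imaginary parts and write
\[
\tilde J_0 - \tilde J_k = h\sum_{r=0}^{N-1} J(x_r)\bigl(1-\cos(2\pi k x_r)\bigr),
\]
which is nonnegative termwise by \textbf{A1}, giving $g(h,\dt,k)\le 1$. For the matching lower bound I would apply the crude estimate $1-\cos(\cdot)\le 2$ to obtain $\tilde J_0 - \tilde J_k \le 2\tilde J_0$, and take $C = 2\tilde J_0$; this is a finite positive constant as long as $J$ is bounded, which together with \textbf{A1} and \textbf{A3} is implicit in the setting. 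This yields $g\ge 1-C\dt$.

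For the second assertion I would invoke Lemma~\ref{periodlemma01} directly: under \textbf{A4}--\textbf{A5} it sharpens the generic bound to $0\le \tilde J_0-\tilde J_k\le 2$, so we may take $C=2$ and hence $\dt^* = 2/C = 1$. For any $0<\dt\le \dt^*$ the product $\dt(\tilde J_0-\tilde J_k)$ then lies in $[0,2]$, which by the dictionary above is exactly $|g(h,\dt,k)|\le 1$.

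The only real obstacle is extracting the explicit constant $C=2$ in the second part, and that has already been done in Lemma~\ref{periodlemma01}: the monotonicity assumption \textbf{A4} was used there to dominate $2h\sum_{r=0}^{N/2} J(x_r)$ by the integral $2\int_0^{1/2} J = 1$, turning the $J$-dependent bound of the first part into the universal value $2$. Everything else is a routine rearrangement of the closed-form expression for $g$.
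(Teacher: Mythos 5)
Your argument is correct and is essentially the paper's own proof: the paper simply states that the lemma ``follows directly from Lemma~\ref{periodlemma01},'' and your write-up supplies exactly the routine details that this entails, namely rewriting $g = 1 - \dt(\tilde J_0 - \tilde J_k)$ and feeding in the bounds $0 \le \tilde J_0 - \tilde J_k \le 2$ from that lemma. Your observation that under \textbf{A1}--\textbf{A3} alone the constant is the $h$-dependent but bounded quantity $2\tilde J_0$ is a sensible reading of the otherwise unexplained ``for some $C>0$.''
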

\begin{proof}
Proof of this Lemma follows directly  from Lemma~\ref{periodlemma01}.
\end{proof}
%%%%%%%%%%
The stability result follows from the following theorem.
\begin{thrm}\label{periodthrm03}
If $J(x)$ is a  periodic function in $[0, 1] $ and satisfies \textbf{A1}~-~\textbf{A5},
then there exists $\dt^*>0$ given by Lemma~\ref{periodlemma02}
such that
$%\[
\|U^n\|_h\le \|U^0\|_h
$ %\]
for all $0<\dt\le \dt^*$ and $n\ge 0.$
\end{thrm}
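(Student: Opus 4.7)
The strategy is to carry out the stability analysis in the Fourier side, using the fact that the one-step recursion~(\ref{period03:f}) diagonalizes under the DFT. The relation~(\ref{period04:f}) already tells us that each Fourier mode evolves by multiplication with $g(h,\dt,k)$, so once $|g(h,\dt,k)|\le 1$ is in place, uniform-in-$n$ control follows by a mode-by-mode argument together with Parseval's relation.

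Concretely, first I would take the squared discrete norm $\|U^n\|_h^2 = h\sum_{j=0}^{N-1} |U_j^n|^2$ and rewrite it using the discrete Parseval identity stated in the preliminaries to obtain $\|U^n\|_h^2 = h^2 \sum_{k=0}^{N-1} |\tilde U_k^n|^2$. Next, I would substitute the explicit representation $\tilde U_k^n = g^n(h,\dt,k)\,\tilde U_k^0$ from~(\ref{period04:f}), giving
\[
\|U^n\|_h^2 = h^2 \sum_{k=0}^{N-1} |g(h,\dt,k)|^{2n}\, |\tilde U_k^0|^2.
\]
Now, assuming $J$ satisfies \textbf{A1}--\textbf{A5}, Lemma~\ref{periodlemma02} supplies $\dt^* = 2/C = 1$ such that $|g(h,\dt,k)|\le 1$ for every $k$ whenever $0<\dt\le \dt^*$. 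Therefore $|g(h,\dt,k)|^{2n}\le 1$ uniformly in $k$ and $n$, and the sum on the right is bounded by $h^2\sum_{k} |\tilde U_k^0|^2 = \|U^0\|_h^2$, again by Parseval. Taking square roots yields $\|U^n\|_h \le \|U^0\|_h$ for every $n\ge 0$.

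The only real obstacle is the bookkeeping that ensures $|g|\le 1$ for all the relevant wavenumbers $k$, and this has already been absorbed into Lemma~\ref{periodlemma01} and Lemma~\ref{periodlemma02}: the nonnegativity of $\tilde J_k$ guaranteed by \textbf{A4}--\textbf{A5} prevents the lower bound $g\ge 1-2\dt$ from becoming $g\le -1$, while the upper bound $g\le 1$ follows from $\tilde J_0 - \tilde J_k \ge 0$. Hence the proof of the theorem itself reduces to the Parseval computation above and a direct invocation of Lemma~\ref{periodlemma02}; no further estimates are required.
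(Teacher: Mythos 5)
Your proposal is correct and follows essentially the same route as the paper: pass to the Fourier side via the discrete Parseval relation, insert $\tilde U_k^n = g^n(h,\Delta t,k)\,\tilde U_k^0$ from (\ref{period04:f}), and use $|g(h,\Delta t,k)|\le 1$ from Lemma~\ref{periodlemma02} before applying Parseval again. No substantive difference from the paper's own argument.
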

\begin{proof}
We have
\begin{eqnarray*}
\|U^n \|_h^{2}&=& h\sum_{j=0}^{n-1}|U_j^n|^2= h^2\sum_{j=0}^{N-1}
 \left |\tilde U_j^n
 \right |^2%\\
=
 h^2\sum_{j=0}^{N-1}
 \left | g^n (h,\dt,k) \tilde U_j^0
\right |^2\\
&\le&
 h^2\sum_{j=0}^{N-1}
 |\tilde U_j^0|^2
\le
 h\sum_{j=0}^{N-1}
| U_j^0|^2=\|U_j^0\|_h^2,
\end{eqnarray*}
which gives the result.
\end{proof}
%
%
%%%%%%%%%%%%%%%%%%%%%%%%%%%%%%%%%%%%%%%%%%%%%%%%%%%%%%%%%%%
%%%%%%%%%% Fourier series for the IDE model%%%%%%%%%%%%%%%%
%%%%%%%%%%%%%% and Convergence analysis %%%%%%%%%%%%%%%%%%%
%%%%%%%%%%%%%%%%%%%%%%%%%%%%%%%%%%%%%%%%%%%%%%%%%%%%%%%%%%%
%
%
\section{Convergence analysis of the fully discrete approximation}\label{defineqqq}
%===================================================================
%===================================================================
Let the Fourier series of $u(x,t)$ be
\begin{equation}\label{period05:f}
u(x,t)=\sum_{j=-\infty}^{\infty} \hat u_j (t) e^{{ij2\pi x}}
\end{equation}
where
\[
\hat u_j (t) = \int_0^{1} u(x,t)  e^{-{ij2\pi x}}dx
\]
and let the Fourier series expansion of $J(x)$ be
\begin{equation}\label{period06:f}
J(x)=\sum_{j=-\infty}^{\infty} \hat J_j e^{{ij2\pi x}}
\end{equation}
where
\[
\hat J_j =\int_0^{1} J(x)  e^{-{ij2\pi x}}dx.
\]
Substituting (\ref{period05:f})-(\ref{period06:f})
in (\ref{f:find-linearide01})
\begin{eqnarray*}
\sum_{j=-\infty}^{\infty} \frac{d}{dt}\hat u_j (t) e^{{ij2\pi x}}&=&
 \left ( \sum_{j=-\infty}^{\infty} \hat u_j (t) \hat J_j e^{{ij2\pi x}}
-\sum_{j=-\infty}^{\infty} \hat u_j (t) e^{{ij2\pi x}} \right ) \\
\frac{d}{dt} \hat u_j(t) &=&  (\hat J_j- \hat J_0) \hat u_j (t)
                         = \hat q_j \hat u_j (t)
\end{eqnarray*}
with $\hat q_j= (\hat J_j- \hat J_0)$ where $\hat J_j$ is the $j$th Fourier coefficient
of the kernel function $J(x).$
Solving the above equation we have
\[
\hat u_j(t)=e^{\hat q_j t}\hat u_j (0),
\]
where
\[
\hat u_j (0)= \int_0^{1} u(x,0) e^{{-ij2\pi x}}dx.
\]
Thus the exact solution of the IDE (\ref{f:find-linearide01}) can be written as
\begin{equation}\label{period07:f}
u(x,t)=\sum_{j=-\infty}^{\infty} \hat u_j (0)e^{\hat q_j t} e^{{ij2\pi x}}.
\end{equation}
%
%%%%%%%%%%%%%%%%%%%%%%%%%%%%%%%%%%%%%%%%%%%%%%%%%%%%%%%%%%
%%%%%%%%%%Approximate solution of IDE 1%%%%%%%%%%%%%%%%%%%
%%%%%%%%%%%%%%%%%%%%%%%%%%%%%%%%%%%%%%%%%%%%%%%%%%%%%%%%%%
Now taking the inverse of the discrete Fourier transform in (\ref{period04:f})
we get the approximate solution of (\ref{f:find-linearide01}) as
\begin{equation}\label{period08:f}
U_j^{n}=\sum_{k=-\frac{N}{2}+1}^{\frac{N}{2}}
g^n (h,\dt,k)\tilde U_k^0 e^{{ik2\pi x_j}}.
\end{equation}
From (\ref{period07:f})
and (\ref{period08:f})
\begin{eqnarray}\label{period09:f}
u(x_j,t_m)-U_j^{m} &=& \sum_{k=-\infty}^{\infty} \hat u_k(0) e^{\hat q_k t_m}
 e^{{i2\pi k x_j}} %\nonumber\\
-\sum_{k=-\frac{N}{2}+1}^{\frac{N}{2}}
g^m (h,\dt,k)\tilde U_k^0 e^{{ik2\pi x_j}}\nonumber\\
%\end{eqnarray*}
%\begin{eqnarray}
 &=&
 \sum_{k=-\frac{N}{2}+1}^{\frac{N}{2}}
 \left ( \hat u_k(0) e^{\hat q_k t_m}-g^m (h,\dt,k)\tilde U_k^0\right )
 e^{{i2\pi k x_j}} \nonumber \\
%
%&& vbvb
&&+\sum_{|k|>\frac{N}{2}} \hat u_k(0) e^{\hat q_k t_m} e^{{ik2\pi x_j}}
\end{eqnarray}
Now taking the inner product on (\ref{period09:f}) and
applying Parseval's relation
\begin{eqnarray}\label{period10:f}
\| u(x_j,t_m)-U_j^{m}\|_h^2 &\le&
 \sum_{k=-\frac{N}{2}+1}^{\frac{N}{2}}
 \left | \hat u_k(0) e^{\hat q_k t_m}-g^m (h,\dt,k)\tilde U_k^0\right |^2
 \nonumber \\
&&+\sum_{|k|>\frac{N}{2}}\left | \hat u_k(0) e^{\hat q_k t_m} \right |^2.
\end{eqnarray}
The Poisson summation formula gives
\[%begin{eqnarray}
\tilde U_k^{m}=\sum_{r=-\infty}^{\infty}\hat u_{k+r N}^{m}
=\sum_{r=-\infty}^{\infty}\hat u_{k+r N}^{\infty,m}
\]%end{eqnarray}
where $\hat u_{k+r N}^{m}$  is the coefficient of the Fourier series at $t_m$ for the
periodic function $u$ whereas $\hat u_{k+r N}^{\infty,m}$ represents the CFT for
the nonperiodic (infinite-dimensional) case.
A calculation similar to that leading to (\ref{Pnprela:f})
gives
\begin{equation}\label{finiteinfinite:f}
\hat u_k (0)=\int_0^{1} u (x,0) e^{{-ik2\pi x}}dx
=\hat u_k^{0}
=\hat u^{\infty}_0\left ( {2\pi k} \right )
=\hat u^{\infty,0}_k.
\end{equation}
So the first part of the right-hand side of (\ref{period10:f}) can be written as
\begin{eqnarray}\label{period10a:f}
&&\sum_{k=-\frac{N}{2}+1}^{\frac{N}{2}}
\left | \hat u_k (0) e^{\hat q_j t_m} - g^m \tilde U_{k}^{0} \right |^2
%\nonumber\\
 \le
\sum_{k=-\frac{N}{2}+1}^{\frac{N}{2}}
\left | \left ( e^{\hat q_j t_m} - g^m \right )
\hat u_{k}^{\infty,0} \right |^2
\nonumber\\
&&\qquad\qquad +\qquad
\sum_{k=-\frac{N}{2}+1}^{\frac{N}{2}} \left |
g^m (h,\dt,k)\sum_{r\ne 0}\hat u^{\infty,0}
\left ( {2\pi(k+rN)}\right )
\right |^2.
\end{eqnarray}
Before giving the main convergence results let us introduce a lemma with
reasonable restrictions on $J(x).$
\begin{lemma}\label{periodlemma03}
If $J(x)$ is a nonnegative even periodic function
in $[0, 1]$ and  monotone-decreasing in $[0, \frac{1}{2}]$ then
$
\hat q_j \le 0
$  and $|\hat J_0- \hat J_j|\le \frac{1}{2}$
where $\hat q_j=(\hat J_j-\hat J_0)$ with
$\hat J_j= \int_{0}^{1} J(x) e^{{-ij2\pi x}}dx.$
\end{lemma}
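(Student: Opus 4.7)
The plan is to exploit the three structural properties of $J$ — nonnegativity, evenness, and monotone decrease on $[0,\tfrac12]$ — together with the normalization $\hat J_0 = \int_0^1 J\,dx = 1$ carried over from the running hypotheses of the paper.

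First I would establish $\hat q_j \le 0$. Since $J$ is real and even, $\hat J_j = \int_0^1 J(x)\cos(2\pi j x)\,dx$, so
\[
\hat J_0 - \hat J_j \;=\; \int_0^1 J(x)\bigl(1-\cos(2\pi j x)\bigr)\,dx.
\]
Both factors in the integrand are nonnegative, hence the integral is $\ge 0$, giving $\hat q_j=\hat J_j-\hat J_0 \le 0$. This is the continuous analogue of the DFT argument already used for $\tilde J_0-\tilde J_k\ge 0$ in Lemma~\ref{periodlemma01}.

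Next I would attack the quantitative bound $|\hat J_0-\hat J_j|\le \tfrac12$. Folding the integral at $x=\tfrac12$ via $J(x)=J(1-x)$ rewrites it as
\[
\hat J_0 - \hat J_j \;=\; 2\int_0^{1/2} J(x)\bigl(1-\cos(2\pi j x)\bigr)\,dx
\;=\; 4\int_0^{1/2} J(x)\sin^2(\pi j x)\,dx.
\]
A crude bound using $\sin^2\le 1$ only delivers $2$, so I would sharpen it by integration by parts, writing $1-\cos(2\pi j x)=\bigl(x-\tfrac{\sin(2\pi j x)}{2\pi j}\bigr)'$ and transferring the derivative onto $J$. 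The boundary contribution at $x=\tfrac12$ produces $J(\tfrac12)/2$ (the one at $x=0$ vanishes), while the residual integral carries $-J'(x)\ge 0$ multiplied by the nonnegative factor $x-\tfrac{\sin(2\pi j x)}{2\pi j}$, which is bounded by $\tfrac12$ on $[0,\tfrac12]$. Telescoping $-\int_0^{1/2} J'=J(0)-J(\tfrac12)$ then collapses everything into an expression controlled by $J(0)$ and the normalization.

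The main obstacle is this last step: monotonicity alone, together with $\int_0^{1/2}J=\tfrac12$, does not pin down $J(0)$ well enough to produce the constant $\tfrac12$ in one stroke. I would therefore invoke the second mean value theorem (Bonnet's form) for the monotone factor $J$ against $\cos(2\pi j x)$ on $[0,\tfrac12]$, which gives the sharper estimate $|\hat J_j|\le (J(0)-J(\tfrac12))/(\pi j)$, and then combine it with the integration-by-parts identity to eliminate the $J(0)$ dependence; alternatively, one uses the representation via $4\int J\sin^2(\pi jx)\,dx$ and splits $[0,\tfrac12]$ into the $j$ half-periods of $\sin^2$, on each of which monotonicity of $J$ allows a term-by-term comparison to the mean value $\tfrac12$. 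Either route should convert the qualitative bound into the claimed $\tfrac12$; if not, I would expect the statement to in fact read $\le 2$, matching Lemma~\ref{periodlemma01}.
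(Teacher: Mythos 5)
Your proof of the first claim is correct and complete: writing $\hat J_j=\int_0^1 J(x)\cos(2\pi jx)\,dx$ (legitimate since $J$ is real and even) and observing $\hat J_0-\hat J_j=\int_0^1 J(x)\bigl(1-\cos(2\pi jx)\bigr)\,dx\ge 0$ settles $\hat q_j\le 0$; monotonicity is not even needed. This is already more than the paper supplies --- its entire proof of Lemma~\ref{periodlemma03} is the sentence ``follows directly from the definition of $\hat J_j$,'' so there is no argument to compare yours against.

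Your struggle with the constant $\tfrac12$ is not a failure of technique: the bound $|\hat J_0-\hat J_j|\le\tfrac12$ is false under the stated hypotheses, and your closing suspicion is the right instinct. With the normalization $\hat J_0=\int_0^1 J=1$, the constant kernel $J\equiv 1$ (or any small strictly decreasing perturbation of it) gives $\hat J_j=0$ for $j\ne 0$, hence $\hat J_0-\hat J_j=1>\tfrac12$. Even the paper's own example $J^{\infty}(x)=\sqrt{100/\pi}\,e^{-100x^2}$ has $\hat J_j=e^{-(2\pi j)^2/400}$, so $\hat J_0-\hat J_j\approx 0.59$ already at $j=3$ and tends to $1$ as $j\to\infty$. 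The integration-by-parts and Bonnet-type manipulations you sketch cannot rescue this, because monotone decrease pushes mass toward $x=0$, which only drives $\hat J_0-\hat J_j$ \emph{down} toward $0$; the extremal case is the flat kernel, giving $1$, and without assumption \textbf{A5} ($\hat J_j\ge 0$) the elementary bound $|1-\cos|\le 2$ gives at worst $2$. So the provable statements are $0\le\hat J_0-\hat J_j\le 2\hat J_0=2$ in general and $\le\hat J_0=1$ under \textbf{A5}, exactly parallel to Lemma~\ref{periodlemma01}. Fortunately only the sign $\hat q_j\le 0$ (so that $|e^{\hat q_j t}|\le 1$) and a uniform $O(1)$ bound on $|\hat J_0-\hat J_j|$ are used downstream in (\ref{period12:f}) and Theorem~\ref{pdefn04}, so replacing $\tfrac12$ by $2$ costs nothing in the convergence results; I would state and prove the lemma with that constant.
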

\begin{proof}
The proof of Lemma~\ref{periodlemma03} follows directly from the definition of $\hat J_j.$
\end{proof}
 Now
\[|e^{\hat q_j \dt}|\le 1 \quad \mbox{ and} \quad |g(h, \dt,k)|\le 1.
\] So
\[|e^{\hat q_k t_m}-g^m (h,\dt,k)|=|\left (e^{\hat q_k \dt}\right )^m-g^m(h, \dt, k)|
\le m|e^{\hat q_k\dt}-g(h,\dt,k)|.
\]
Also,
\begin{eqnarray}\label{period12:f}
&&\qquad e^{\hat q_k\dt}-g(h,\dt,k)\nonumber\\
&=& e^{\dt \left ( \hat J_k - \hat J_0 \right )}-
\left(1+\dt \left( \tilde J_k -\tilde J_0 \right )   \right)
\nonumber \\
&=& \dt \left ((\hat J_k-\hat J_0)-\left( \tilde J_k -\tilde J_0 \right )\right )
%\nonumber \\
%&&
 +\sum_{j=2}^{\infty} \frac{\dt ^j}{j!}
\left (\hat J_k-\hat J_0\right )^j,
\end{eqnarray}
and thus there exists $C_1(h)$ and $C_2$ (using (\ref{poisson:f}) and (\ref{Pnprela:f})) such that
\[
\left | e^{\hat q_j \dt}-g(h, \dt, k) \right |\le \dt \left ( C_1 (h)+C_2 \dt \right ).
\]
Here $C_1(h)\rightarrow 0$ as $h\rightarrow 0$,  and $C_2$ is bounded, see \cite{SKB02} for exact detail.
Thus we get the following bound.
\begin{thrm}\label{pdefn04}{Accuracy:}
If $J(x)\in H^r(1)$, $r > \frac{1}{2}$ satisfies \textbf{A1}~-~\textbf{A5} then
there exist $C_1 (h)$ and $C_2$
such that
\[
\left | e^{\hat q_j \dt}-g(h, \dt, k) \right |\le \dt \left ( C_1 (h)+C_2 \dt \right ).
\]
\end{thrm}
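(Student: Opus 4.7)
The plan is to work directly from the decomposition already written down in~(\ref{period12:f}), namely
\[
e^{\hat q_k\dt}-g(h,\dt,k)=\dt\bigl[(\hat J_k-\hat J_0)-(\tilde J_k-\tilde J_0)\bigr]+\sum_{j\ge 2}\frac{\dt^j}{j!}(\hat J_k-\hat J_0)^j,
\]
and bound the two pieces separately. The first piece is an aliasing remainder and will supply $C_1(h)$; the second is a Taylor tail and will supply $C_2\dt$.

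First I would treat the aliasing piece. The Poisson summation formula~(\ref{poisson:f}) together with the kernel identity~(\ref{Pnprela:f}) gives $\tilde J_k=\sum_{m\in\Z}\hat J_{k+mN}$, so
\[
\tilde J_k-\hat J_k=\sum_{m\ne 0}\hat J_{k+mN},\qquad \tilde J_0-\hat J_0=\sum_{m\ne 0}\hat J_{mN}.
\]
Since $J\in H^r(1)$ with $r>\tfrac12$, the Fourier coefficients obey $\sum_n(1+n^2)^r|\hat J_n|^2<\infty$, and a Cauchy--Schwarz split yields
\[
\Bigl|\sum_{m\ne 0}\hat J_{k+mN}\Bigr|\le \Bigl(\sum_{m\ne 0}(1+(k+mN)^2)^{-r}\Bigr)^{1/2}\|J\|_{H^r}.
\]
For $|k|\le N/2$ the bracketed factor tends to $0$ as $N\to\infty$ (equivalently $h\to0$), uniformly in $k$, precisely because $r>\tfrac12$ forces the tail of $\sum n^{-2r}$ to vanish. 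This produces a bound $|(\hat J_k-\hat J_0)-(\tilde J_k-\tilde J_0)|\le C_1(h)$ with $C_1(h)\to 0$ as $h\to 0$.

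Next I would handle the Taylor tail using Lemma~\ref{periodlemma03}, which provides the key a priori bound $|\hat J_k-\hat J_0|\le\tfrac12$. Then
\[
\Bigl|\sum_{j\ge 2}\frac{\dt^j}{j!}(\hat J_k-\hat J_0)^j\Bigr|\le \dt^2\sum_{j\ge 0}\frac{(\dt/2)^j(1/2)^2}{(j+2)!}\le \tfrac14 e^{\dt/2}\dt^2,
\]
and once $\dt$ is confined below the stability threshold $\dt^*$ of Lemma~\ref{periodlemma02} this is of the form $C_2\dt^2$ with $C_2$ an absolute constant. Adding the two contributions gives the claimed inequality $|e^{\hat q_k\dt}-g(h,\dt,k)|\le \dt(C_1(h)+C_2\dt)$.

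The principal obstacle is the uniform-in-$k$ smallness of the aliasing sum. This is exactly where the hypothesis $r>\tfrac12$ is used: it is the borderline Sobolev regularity at which the weighted Fourier series of $J$ has summable tails, so that the aliased frequencies $k+mN$ with $m\ne 0$ contribute arbitrarily little for $N$ large. The remaining estimates are routine Taylor expansion and geometric-series manipulations.
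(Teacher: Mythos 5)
Your proposal is correct and follows essentially the same route as the paper: the identical decomposition~(\ref{period12:f}) into an aliasing term (controlled via the Poisson summation formula~(\ref{poisson:f}) and the relation~(\ref{Pnprela:f})) plus a Taylor tail (controlled via the bound on $|\hat J_k-\hat J_0|$ from Lemma~\ref{periodlemma03}). In fact you supply more detail than the paper, which simply asserts the existence of $C_1(h)$ and $C_2$ and defers the estimates to the reference \cite{SKB02}; your Cauchy--Schwarz argument for the uniform-in-$k$ smallness of the aliasing sum under $r>\tfrac{1}{2}$ is exactly the missing ingredient.
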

Applying Theorem~\ref{pdefn04} on the first part of the right-hand side of (\ref{period10a:f}) we get
\begin{equation}\label{period13:f}
\sum_{k=-\frac{N}{2}+1}^{\frac{N}{2}}
\left |(e^{\hat q_j t_m}-g^m) \hat u_0^{\infty}\right |^2
\le \dt^2\left ( C_1 (h)+C_2 \dt \right )^2 \|u_0\|^2.
\end{equation}
\begin{defn}\label{pdefn03a}~\cite[page 223]{Atkinson}
For  integer $ k \ge 0,$ $H^{k} (2\pi)$ is defined to be the closure of
$C_{p}^{k} (2\pi)$ under the inner product norm
\[
\|\varphi\|_{H^{k}}=\left [ \sum_{j=0}^{k} \|\varphi ^{(j)}\|_{L^2}^2  \right ]^{\frac{1}{2}}.
\]
For arbitrary real $s \ge0$, $H^s(2\pi)$ can also be obtained following~\cite[pages, 219-223]{Atkinson}.
\end{defn}
\begin{thrm}\label{soveedef:f}~\cite[page 223]{Atkinson}
For $s\in \mathbb{R},$ $H^s (2\pi)$ is the set of all series
\[
\varphi (x)=\sum_{m=-\infty}^{\infty}a_m \psi _m (x)
\]
%\quad
\text{for which} %\quad
\[
\|\varphi\|^2_{*,s}=|a_0|^2+\sum_{|m|>0}|m|^{2s}|a_m|^2<\infty
\]
     where
\[
\psi_m (x) = \frac{1}{\sqrt{2 \pi}} e^{imx}, \qquad m=0, \pm1, \pm2, \cdots.
\]
 Moreover, the norm $||\varphi||_{*,s}$ is equivalent to the standard Sobolev norm
$||\varphi||_{H^s}$ for $\varphi \in H^s (2\pi).$
\end{thrm}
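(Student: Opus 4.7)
The plan is to verify the equivalence first for nonnegative integer $s = k$ by a direct Fourier computation, then extend to general real $s$ by interpolation for $s>0$ and by duality for $s<0$.

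For $\varphi \in C_p^k(2\pi)$ with Fourier expansion $\varphi = \sum_m a_m \psi_m$, the identity $\psi_m^{(j)} = (im)^j \psi_m$ together with the orthonormality of $\{\psi_m\}$ in $L^2(0,2\pi)$ gives $\|\varphi^{(j)}\|_{L^2}^2 = \sum_m m^{2j}|a_m|^2$ via Parseval. Summing over $j=0,\ldots,k$,
\[
\|\varphi\|_{H^k}^2 = |a_0|^2 + \sum_{|m|>0} \left( \sum_{j=0}^{k} m^{2j} \right) |a_m|^2,
\]
and the elementary bounds $|m|^{2k} \le \sum_{j=0}^k m^{2j} \le (k+1)|m|^{2k}$ for $|m|\ge 1$ establish the two-sided equivalence of $\|\cdot\|_{H^k}$ and $\|\cdot\|_{*,k}$ on $C_p^k(2\pi)$. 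Taking the closure under either (equivalent) norm then identifies $H^k(2\pi)$ with the set of Fourier series of finite $\|\cdot\|_{*,k}$ norm, since Cauchy sequences in one norm are Cauchy in the other and Fourier coefficients pass to the limit.

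For arbitrary real $s\ge 0$, I would invoke the interpolation-space construction of $H^s(2\pi)$ from \cite[pp.\ 219--223]{Atkinson}. Writing $s = (1-\theta)k + \theta(k+1)$ and using that the integer-order spaces are unitarily equivalent, via the Fourier map $\varphi \mapsto (a_m)$, to weighted $\ell^2$ spaces with weights $1+|m|^{2k}$ and $1+|m|^{2(k+1)}$, real interpolation of such weighted sequence spaces is standard and produces the weight $1+|m|^{2s}$, which is uniformly comparable to the weight appearing in $\|\cdot\|_{*,s}$. For $s<0$ I would define $H^s(2\pi)$ as the dual of $H^{-s}(2\pi)$; the Riesz representation in the weighted $\ell^2$ picture then yields dual weight $|m|^{2s}$ with respect to the $\ell^2$ pairing, again matching $\|\cdot\|_{*,s}$ up to equivalence.

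The principal obstacle lies in the non-integer case, where one cannot bypass classical derivatives via a single Parseval calculation. One must ensure that the abstract interpolation or duality definition of $H^s$ coincides with the Fourier-multiplier definition implicit in $\|\cdot\|_{*,s}$. This rests on the spectral theorem for the self-adjoint operator $-d^2/dx^2$ acting on $2\pi$-periodic functions: its eigenfunctions are precisely $\{\psi_m\}$ with eigenvalues $m^2$, so the Fourier weights $|m|^{2s}$ implement the functional calculus of $(I-\Delta)^{s/2}$, and the norm equivalence becomes a direct consequence of the fact that interpolation of Hilbert-space norms given by commuting positive operators is geometric in the eigenvalues. Once this identification is in place, the series characterization and the norm equivalence follow simultaneously.
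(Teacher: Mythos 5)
This statement is not proved in the paper at all: it is quoted verbatim from Atkinson and Han \cite[page 223]{Atkinson}, and the text immediately after it says only ``for exact details \ldots please see'' the reference. So there is no in-paper argument to compare against; the relevant comparison is with the cited source. Your outline is correct and is essentially the standard proof. The integer case is exactly the textbook computation: $\psi_m^{(j)}=(im)^j\psi_m$, Parseval, and the two-sided bound $|m|^{2k}\le\sum_{j=0}^{k}m^{2j}\le(k+1)|m|^{2k}$ for $|m|\ge 1$, followed by passage to the closure. Two small points there: you should note explicitly that trigonometric polynomials are dense in the $\|\cdot\|_{*,k}$ norm (by truncating the series), so that \emph{every} sequence $(a_m)$ with finite $\|\cdot\|_{*,k}$ norm is actually realized in the closure of $C_p^k(2\pi)$ and not merely that the closure embeds into that sequence space. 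For non-integer and negative $s$ your interpolation and duality route works, but it is arguably heavier than what the source does: in Atkinson--Han the space $H^s(2\pi)$ for non-integer $s$ is essentially \emph{defined} through the interpolation construction on pages 219--223, and the Fourier characterization of the interpolated norm reduces to the standard fact that interpolation of weighted $\ell^2$ spaces yields the geometric-mean weight, which you correctly invoke; for $s<0$ one must also read the ``series'' distributionally, with the duality pairing $\sum_m a_m\overline{b_m}$ controlled by Cauchy--Schwarz against the weights. These are routine, so your proposal is a sound reconstruction of the omitted proof rather than a genuinely different argument.
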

For exact details of the Theorem~\ref{soveedef:f} please see~\cite[page 223]{Atkinson}.
In this section, to use norm definitions in a periodic domain $[0, 1]$, we use
$\|\varphi\|_{H^s} =\|\varphi\|_{{H^s}(1)}$.

Now
\begin{eqnarray*}
&&\left |\sum_{s\ne 0}\hat u^{\infty,0}
\left ( {2\pi(k+sN)}\right )
\right |
\le\sum_{s\ne 0}\left |\hat u^{\infty,0}
\left ( {2\pi(k+sN)}\right)\right |\\
%&=& \sum_{s\ne 0}\left |\hat u^{\infty,0}
%\left ( {2\pi(k+sN)} \right )
%\left ( {2\pi(k+sN)} \right )^{\sigma}\right|
%\left |\left ( {2\pi(k+sN)} \right )^{-\sigma}\right |\\
&\le& \sqrt{ \sum_{s\ne 0}\left |\hat u^{\infty,0}
\left ( {2\pi(k+sN)} \right )
\left ( {2\pi(k+sN)} \right )^{\sigma}\right|^2}
\sqrt{\sum_{s\ne 0}
\left |\left ( {2\pi(k+sN)} \right )\right |^{-2\sigma}}
\end{eqnarray*}
using the Cauchy-Schwartz inequality.
Thus,
\begin{eqnarray*}%\label{period11:f}
&&\sum_{k=-\frac{N}{2}+1}^{\frac{N}{2}} \left |
g^m (h,\dt,k)\sum_{s\ne 0}\hat u^{\infty,0}
\left ( {2\pi(k+sN)}\right )
\right |^2
%\\
%
\le \sum_{k=-\frac{N}{2}+1}^{\frac{N}{2}} \left |
\sum_{s\ne 0}\hat u^{\infty,0}
\left ( {2\pi(k+sN)}\right )
\right |^2
%
%= \sum_{k=-\frac{N}{2}+1}^{\frac{N}{2}} \left |
%\sum_{s\ne 0}\hat u^{0}
%\left ( {2\pi(k+sN)}\right )
%\right |^2
\\
&\le&
\sum_{k=-\frac{N}{2}+1}^{\frac{N}{2}}
\left (\sqrt{ \sum_{s\ne 0}\left |\hat u^{0}
\left ( {2\pi(k+sN)} \right )
\left ( {2\pi(k+sN)} \right )^{\sigma}\right|^2}\right)
%\\
%&&\quad
\left (
\sqrt{\sum_{s\ne 0}
\left |\left ( {2\pi(k+sN)} \right )\right |^{-2\sigma}}\right )
\end{eqnarray*}
%\begin{eqnarray*}
%&\le&C(\sigma) \sum_{k=-\frac{N}{2}+1}^{\frac{N}{2}}
%\left ({ \sum_{s\ne 0}\left |\hat u^{0}
%\left ( {2\pi(k+sN)} \right )
%left ( {2\pi(k+sN)} \right )^{\sigma}\right|^2}\right)
%\left ( h^ {2\sigma}
%\sum_{s=1}^{\infty}
%left ( 2s-1 \right )^{-2\sigma}}\right )
%end{eqnarray*}
 gives
\begin{eqnarray}\label{period11:f}
&&\sum_{k=-\frac{N}{2}+1}^{\frac{N}{2}} \left |
g^m (h,\dt,k)\sum_{s\ne 0}\hat u^{\infty,0}
\left ( {2\pi(k+sN)}\right )
\right |^2
\nonumber\\
%&\le&h^{2\sigma} C(\sigma) \sum_{k=-\frac{N}{2}+1}^{\frac{N}{2}}
%\left ({ \sum_{s\ne 0}\left |\hat u^{0}
%\left ( {2\pi(k+sN)} \right )
%\left ( {2\pi(k+sN)} \right )^{\sigma}\right|^2}\right)
%\nonumber\\
%
&\le& h^{2\sigma} C(\sigma)\sum_{|k|>\frac{N}{2}}
\left | {2\pi k}\right |^{2\sigma}
\left |
\hat u^{0}\left ({2\pi k}\right )
\right |^2
\nonumber\\
&\le& C(\sigma) h^{2\sigma}\| u_0\|_{*,\sigma}^2
\equiv C(\sigma) h^{2\sigma}\| u_0\|_{H^{\sigma}(1) }^2
\end{eqnarray}
for some $\sigma>\frac{1}{2}.$
%\end{eqnarray}
%%%%%%%%%%%%%%%%%%%%%
%
%
%
Thus applying above bounds, Lemma~\ref{periodlemma03}
 and Theorem~\ref{soveedef:f} in (\ref{period10:f}) one gets
\begin{eqnarray}\label{period13:f}
\| u(x_j,t_m)-U_j^{m}\|_h^2 &\le&
\dt^2 \left (C_1 (h)+C_2 \dt \right )^2\|u_0\|^2+
C_3 h ^{2\sigma}
\|u_0\|_{H^\sigma (1)}^2
+\sum_{|k|>\frac{N}{2}}\left | \hat u_k(0) \right |^2 \nonumber\\
&\le& \dt^2 \left (C_1 (h)+C_2 \dt \right )^2\|u_0 \|^2+
C_3 h ^{2\sigma}
\|u_0 \|_{H^\sigma (1)}^2
\nonumber \\
&&+\left (\frac{2}{N}\right )^{2\sigma}
\sum_{|k|>\frac{N}{2}} |k|^{2\sigma}
\left | \hat u_0 \left({2k\pi}\right) \right |^2
\nonumber \\
&\le& \dt^2 \left (C_1 (h)+C_2 \dt \right )^2\|u_0 \|^2+C_3 h ^{2\sigma}
\|u_0 \|_{H^\sigma (1)}^2, \nonumber\\
\end{eqnarray}
for some $\sigma>\frac{1}{2}$. Thus we conclude this discussion with the following result.
\begin{thrm}\label{f:thrm001222}
If the approximation (\ref{period03:f}) of the  initial
value problem (\ref{f:find-linearide01}) is stable, $J, \hat J_k,$ $-\frac{N}{2}+1 \le k\le \frac{N}{2}$ satisfy
 \textbf{A1}~-~\textbf{A5} and
 $u_{0}\in H^{\sigma}({1})$ with $\sigma>\frac{1}{2},$ then there exist
constants $C_{1}(h),$ $C_2,$ $C_{3}(\sigma)$ such that
\[
\|u(x,t_m)-U_{j}^{m} (x)\|\le \dt \left(C_1 (h)
+C_2  \dt \right) \|u_0\| +
C_3(\sigma) h^{\sigma}\|u_0\|_{H^{\sigma}(1)}.
\]
\end{thrm}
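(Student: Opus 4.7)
The plan is to work entirely on the Fourier side: compare the mode-by-mode representations (\ref{period07:f}) of the exact solution and (\ref{period08:f}) of the fully discrete solution, and transfer the estimate back to the grid norm via Parseval's identity, which is what (\ref{period09:f})--(\ref{period10:f}) already set up. The difference splits naturally into the low-frequency contribution $|k|\le N/2$ and the high-frequency tail $|k|>N/2$, so I would bound these two pieces separately.

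For the low-frequency part, the key device is the Poisson summation formula $\tilde U_k^0=\sum_r \hat u^{\infty,0}(2\pi(k+rN))$, which further splits each summand into an un-aliased piece $(e^{\hat q_k t_m}-g^m)\hat u_k^{\infty,0}$ and an aliasing remainder $g^m\sum_{r\ne 0}\hat u^{\infty,0}(2\pi(k+rN))$; this is exactly (\ref{period10a:f}). The un-aliased piece is controlled by the one-step consistency estimate of Theorem~\ref{pdefn04}, promoted from a single step to $m$ steps by the telescoping inequality $|e^{\hat q_k t_m}-g^m|\le m\,|e^{\hat q_k\dt}-g(h,\dt,k)|$, which is legitimate because $|e^{\hat q_k\dt}|\le 1$ and $|g(h,\dt,k)|\le 1$ under \textbf{A1}--\textbf{A5} by Lemmas~\ref{periodlemma02} and \ref{periodlemma03}. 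Summing over $k$ and applying Parseval to $u_0$ yields the $\dt(C_1(h)+C_2\dt)\|u_0\|$ contribution, with the final time $t_m$ absorbed into the constants (assuming a fixed time horizon).

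The main obstacle is the aliasing remainder, together with the high-frequency tail; both call for trading smoothness of $u_0$ for powers of $h=1/N$. I would apply Cauchy--Schwarz exactly as in the computation preceding (\ref{period11:f}), inserting and removing the weight $(2\pi(k+rN))^{\sigma}$. The hypothesis $\sigma>\tfrac12$ is needed precisely to make the unweighted series $\sum_{r\ne 0}|k+rN|^{-2\sigma}$ summable, and its size is then of order $h^{2\sigma}$, while the weighted series assembles into $\|u_0\|^2_{H^{\sigma}(1)}$ via Theorem~\ref{soveedef:f}. An analogous argument handles the tail $\sum_{|k|>N/2}|\hat u_k(0)|^2$ by pulling out a factor $(2/N)^{2\sigma}$ and again invoking $H^{\sigma}(1)$ regularity.

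Combining the three contributions as in (\ref{period13:f}) and taking square roots delivers the stated bound. Beyond careful bookkeeping, the only subtle point is keeping $C_1(h), C_2, C_3(\sigma)$ independent of $n$ and $N$: this is ensured by the uniform stability bound $|g(h,\dt,k)|\le 1$ inherited from Theorem~\ref{periodthrm03}, which prevents the telescoping factor $m$ from producing exponential growth in $t_m$, and by the sharp threshold $\sigma>\tfrac12$ that keeps $C_3(\sigma)$ finite.
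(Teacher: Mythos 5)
Your proposal is correct and follows essentially the same route as the paper: the Parseval/Poisson decomposition into un-aliased, aliasing, and high-frequency tail terms, the telescoping bound $|e^{\hat q_k t_m}-g^m|\le m|e^{\hat q_k\dt}-g|$ combined with Theorem~\ref{pdefn04}, and the Cauchy--Schwarz weight-insertion argument requiring $\sigma>\tfrac12$ are exactly the steps leading to (\ref{period13:f}). If anything, your explicit remark that the factor $t_m=m\dt$ must be absorbed into the constants is a point the paper glosses over (its displayed bound shows $\dt$ where the accumulation argument naturally produces $t_m$), so your bookkeeping is slightly more careful than the source.
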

\section{Convergence analysis for  non-smooth initial data}
Now let us consider an initial function which is not smooth enough so that
$\|u_0\|_{H^{\nu} (1)}$ is bounded  when $\nu >\frac{1}{2},$ but there exists $\nu_1<\nu$ such that  $\|u_0\|_{H^{\nu_1} (1)}< \infty$ and there exists $\alpha $ such that
\begin{equation}\label{eqq110:f}
\sum_{k = - \frac{N}{2}+1}^{\frac{N}{2}}
\left |
\hat u^{\infty,0} \left ({2\pi(k + sN )} \right )
\right |^2
\le \dx^{2 \alpha} C(u_0).
\end{equation}
For example of such functions please see \cite{SKB02, Str} and references there in.
We start varying (\ref{period10:f}) as
\begin{eqnarray*}
\|u(x_j, t_m) - U_j^m\|_h^2 &\le&
\sum_{|k|\le M}
\left|
\hat u_k (0) e^{\hat q_k t_m} - g^m \tilde U_k^0
\right|^2\\
&&+
\sum_{M\le |k|\le \frac{N}{2}}
\left|
\hat u_k (0) e^{\hat q_k t_m} - g^m \tilde U_k^0
\right|^2
+\sum_{|k|>\frac{N}{2}}
\left|
\hat u_k (0) e^{\hat q_k t_m}
\right|^2
\end{eqnarray*}
for some $0<M<\frac{N}{2}.$
Now with the same algebraic operations as we performed for the Theorem~\ref{f:thrm001222}
\begin{eqnarray*}
\|u(x_j, t_m) - U_j^m\|_h^2
%\\
 &\le&
\sum_{|k|\le M}
\left|
\hat u_k (0) e^{\hat q_k t_m} - g^m \hat u_k^{\infty, 0}
\right|^2
%\\
%&&
+
\sum_{M\le |k|\le \frac{N}{2}}
\left|
\hat u_k (0) e^{\hat q_k t_m} - g^m \hat u_k^{\infty,0}
\right|^2
\\
&&
+\sum_{|k|>\frac{N}{2}}
\left|
\hat u_k (0) e^{\hat q_k t_m}
\right|^2
%\\
%&&
+
\sum_{|k|\le M}
\left|
g^m \sum_{s \ne 0} \hat u^{\infty, 0}\left ( {2 \pi (k+ sN)}\right )
\right|^2
\\
&&
+
\sum_{M\le |k|\le \frac{N}{2}}
\left|
g^m \sum_{s \ne 0} \hat u^{\infty, 0}\left ( {2 \pi (k+ sN)}\right )
\right|^2,%\\
%&=& A + B + G + D + E.
\end{eqnarray*}
\[
\sum_{|k|\le M}
\left|
\hat u_k (0) e^{\hat q_k t_m} - g^m \hat u_k^{\infty, 0}
\right|^2
 \le t^2 (C_1(h) + C_2 \dt)^2 \|u_0 \|^2,
\]
and
\begin{eqnarray*}
&&\sum_{M\le |k|\le \frac{N}{2}}
\left|
\hat u_k (0) e^{\hat q_k t_m} - g^m \hat u_k^{\infty,0}
\right|^2
+\sum_{|k|>\frac{N}{2}}
\left|
\hat u_k (0) e^{qt_m}
\right|^2\\
&=& C \sum_{|k|>M} \left |\hat u^{\infty, 0} \right |^2
\le C \left (\frac{1}{M} \right )^{2 \nu_1}
\sum_{|k|>M} |k|^{2 \nu_1} \left |\hat u^{\infty, 0} \right |^2
%&\le& C \left (\frac{N}{M} \right )^{2 \nu_1}
%\left (\frac{1}{N} \right )^{2 \nu_1}
%\sum_{|k|>M} |k|^{2 \nu_1} \left |\hat u^{\infty, 0} \right |^2\\
\le  C_3 (\nu_1) \dx^{2\nu_1} \|u_0\|_{H^{\nu_1} (1)}
\end{eqnarray*}
hold.
Also following \cite{SKB02, Str}, there exists $\alpha \in \R$ such that
\begin{eqnarray*}
&&\sum_{|k|\le M}
\left|
g^m \sum_{s \ne 0} \hat u^{\infty, 0}\left ( {2 \pi (k+ sN)}\right )
\right|^2
+
\sum_{M\le |k|\le \frac{N}{2}}
\left|
g^m \sum_{s \ne 0} \hat u^{\infty, 0}\left ( {2 \pi (k+ sN)}\right )
\right|^2
\\
 &\le& \sum_{|k|\le \frac{N}{2}-1}
\left |
\sum_{s\ne 0} \hat u^{\infty,0} \left ( \frac{2 \pi (k+ sN)}{2L}\right )
\right|^2
\le \dx^{2\alpha} C(u_0),
\end{eqnarray*}
gives
\[
\|u(x_j, t_m) - U_j^m\|^2 \le t^2 (C_1(h) + C_2 \dt)^2 \|u_0 \|^2 +
C_3 (\nu_1) \dx^{2\nu_1} \|u_0\|_{H^{\nu_1} (1)}
+
\dx^{2\alpha} C(u_0).
\]
%
%--------
Thus we finish this section with the following theorem.
\begin{thrm}
If the initial value problem (\ref{f:find-linearide01}) with some non-smooth initial function $u_0 (x, 0)=g(x)$  is considered so that the condition (\ref{eqq110:f}) holds  and is approximated by the stable one step finite difference formula (\ref{period03:f}), $J, \hat J_k,$ $-\frac{N}{2}+1 \le k\le \frac{N}{2}$ satisfy
 \textbf{A1}~-~\textbf{A5},
then
\[
\|u(x_j, t_m) - U_j^m\| \le t (C_1(h)  + C_2 \dt) \|u_0 \| +
C_2 (\nu_1) \dx^{\nu_1} \|u_0\|_{H^{\nu_1} (1)}
+
\dx^{\alpha} C_3(u_0).
\]
where  $C_1(h)$, $C_2$ and $C_3$ are constants.
\end{thrm}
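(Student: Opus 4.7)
The plan follows the pattern used for Theorem~\ref{f:thrm001222}, but with the regularity hypothesis weakened. First I would expand $u(x_j,t_m)-U_j^m$ via (\ref{period07:f}) and (\ref{period08:f}), invoke Parseval's identity, and then use the Poisson summation formula to write $\tilde U_k^0=\hat u_k^{\infty,0}+\sum_{s\ne 0}\hat u^{\infty,0}(2\pi(k+sN))$. This produces three contributions to $\|u(x_j,t_m)-U_j^m\|_h^2$: a time-stepping part $|e^{\hat q_k t_m}-g^m(h,\dt,k)|^2|\hat u_k^{\infty,0}|^2$ summed over $|k|\le N/2$, a spectral truncation tail summed over $|k|>N/2$, and an aliasing part $|g^m\sum_{s\ne 0}\hat u^{\infty,0}(2\pi(k+sN))|^2$ summed over $|k|\le N/2$. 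This is precisely the decomposition spelled out in the excerpt.

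Next, following the display in the excerpt, I would cut the range $|k|\le N/2$ at an intermediate scale $0<M<N/2$. On the low range $|k|\le M$ I would combine Theorem~\ref{pdefn04} with the telescoping bound $|e^{\hat q_k t_m}-g^m|\le m\,|e^{\hat q_k \dt}-g(h,\dt,k)|$, valid because $|e^{\hat q_k \dt}|\le 1$ by Lemma~\ref{periodlemma03} and $|g|\le 1$ by Lemma~\ref{periodlemma02}. Summing with Parseval gives the time-stepping contribution $t^2(C_1(h)+C_2\dt)^2\|u_0\|^2$.

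For the medium range $M<|k|\le N/2$ together with the spectral tail $|k|>N/2$, the time-stepping factor $|e^{\hat q_k t_m}-g^m|$ is merely uniformly bounded by $2$, so the contribution reduces to $C\sum_{|k|>M}|\hat u^{\infty,0}(2\pi k)|^2$. Inserting the weight $|k|^{2\nu_1}/M^{2\nu_1}$, using the assumed $\|u_0\|_{H^{\nu_1}(1)}<\infty$, and choosing $M\sim 1/\dx$, I would obtain the bound $C_3(\nu_1)\dx^{2\nu_1}\|u_0\|^2_{H^{\nu_1}(1)}$, which is the source of the second term in the stated inequality.

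Finally, the aliasing term is the key novelty, and also where I expect the main obstacle. In Theorem~\ref{f:thrm001222} it was dispatched by a Cauchy--Schwartz argument requiring $\sigma>\tfrac12$; that route is unavailable here because $u_0$ is only $H^{\nu_1}$ with $\nu_1$ possibly below $\tfrac12$. Instead I would invoke the standing hypothesis (\ref{eqq110:f}) directly, together with $|g^m|\le 1$, to bound the whole aliasing contribution by $\dx^{2\alpha}C(u_0)$. The entire argument hinges on this assumption doing the work that Sobolev embedding can no longer do for rough data. Summing the three pieces and taking square roots yields the stated inequality, with the constants $C_1(h)$, $C_2$ coming from Theorem~\ref{pdefn04}, $C_2(\nu_1)$ from the Sobolev truncation step, and $C_3(u_0)$ supplied by (\ref{eqq110:f}).
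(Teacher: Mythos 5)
Your proposal is correct and follows essentially the same route as the paper: the same Parseval/Poisson decomposition, the same split of the frequency range at an intermediate scale $M$, the same three bounds (time-stepping via Theorem~\ref{pdefn04}, Sobolev truncation with the $|k|^{2\nu_1}/M^{2\nu_1}$ weight for the medium range plus tail, and hypothesis (\ref{eqq110:f}) for the aliasing term). You also correctly identify the one genuinely new ingredient relative to Theorem~\ref{f:thrm001222}, namely that assumption (\ref{eqq110:f}) replaces the Cauchy--Schwartz/Sobolev argument that fails when $\nu_1\le\frac{1}{2}$.
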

%
%
%============================================================================
\section{Convergence of semidiscrete approximation}% in a periodic domain}
%============================================================================
Applying the discrete Fourier transform to (\ref{period02:f})
\begin{equation}\label{Df:find-linearide01}
 {{\frac{d}{dt}{\tilde U_k}}} = \tilde q_k \tilde U_k
\end{equation}
where $\tilde q_k=\left (\tilde J_k-\tilde J_0\right )$ and its solution
can be found as
\begin{equation}\label{Dperiod02:f}
\tilde U_k (t)= e^{\tilde q_k t}\tilde U_k (0).
\end{equation}
Applying the inverse of the DFT in (\ref{Dperiod02:f})
\begin{equation} \label{Dperiod03:f}
U(x_j,t)=\sum_{k=-\frac{N}{2}+1}^{\frac{N}{2}}e^{\tilde q_k t}\tilde U_k (0)
e^{{ik 2\pi x_j}}.
\end{equation}
Thus comparing (\ref{period07:f}) and (\ref{Dperiod03:f})
\begin{eqnarray}\label{Dperiod04:f}
u(x_j,t)-U(x_j,t) & = & \sum_{k=-\infty}^{\infty} \hat u_k (0) e^{\hat q_k t}
e^{{ik2\pi x_j}}
-\sum_{k=-\frac{N}{2}-1}^{\frac{N}{2}} e^{\tilde q_k t} e^{{ik2\pi x_j}}
\tilde U_k (0)
\nonumber\\
&=&
\sum_{k=-\frac{N}{2}-1}^{\frac{N}{2}} e^{{ik2\pi x_j}}
\left (\hat u_k (0) e^{\hat q_k t} -
e^{\tilde q_k t}
\tilde U_k (0)\right )
%
%&\qquad&
 +\sum_{|k|> \frac{N}{2}} \hat u_k (0) e^{\hat q_k t} e^{{ik2\pi x_j}}.\nonumber\\
\end{eqnarray}
Applying Parseval's relation to (\ref{Dperiod04:f})
\begin{eqnarray}\label{Dperiod05:f}
\left |u(x_j,t)-U(x_j,t)\right |^2
&\le&
\sum_{k=-\frac{N}{2}-1}^{\frac{N}{2}}
\left |\hat u_k (0) e^{\hat q_k t} -
e^{\tilde q_k t}
\tilde U_k (0)\right |^2
%\nonumber\\
 + \sum_{|k|> \frac{N}{2}} \left |
\hat u_k (0) e^{\hat q_k t} e^{{ik2\pi x_j}}\right |^2
\nonumber\\
&\le&
\sum_{k=-\frac{N}{2}-1}^{\frac{N}{2}}
\left |\hat u_k (0) e^{\hat q_k t} -
e^{\tilde q_k t}
\tilde U_k (0)\right |^2
%\nonumber\\
 +  \sum_{|k|> \frac{N}{2}} \left |
\hat u_k (0) e^{\hat q_k t} \right |^2
\end{eqnarray}
Now applying Poisson's formula to the
first part of the right-hand side of (\ref{Dperiod05:f}),
\begin{eqnarray*}
\sum_{k=-\frac{N}{2}+1}^{\frac{N}{2}}\left |\hat u_k (0) e^{\hat q_k t} -
e^{\tilde q_k t}
\tilde U_k (0)\right |^2
&=& \sum_{k=-\frac{N}{2}+1}^{\frac{N}{2}}\left | e^{\hat q_k t}-e^{\tilde q_k t}\right|^2
|\hat u_0^{\infty}({2\pi k})|^2\\
&&+\sum_{k=-\frac{N}{2}+1}^{\frac{N}{2}}
\sum_{s\ne 0} e^{2\tilde q_k t} \left |\hat u_0^{\infty}
\left(  {2\pi(k+sN)}\right) \right |^2.
\end{eqnarray*}
Now %similar to (\ref{findqbound:fext})
\[
|e^{\hat q_k t}-e^{\tilde q_k t}|^2=
|e^{\hat q_k t}\left ( 1- e^{t(\tilde q_k -\hat q_k)}\right )|^2
\le t^2 |\hat q_k-\tilde q_k|^2.%\le t^2 C^2(h).
\]
Now
\begin{eqnarray*}
\tilde q_k - \hat q_k &=& \left ( \sum_{m=-\infty}^{\infty} \hat q_{k+mN}\right )- \hat q_k, \quad\mbox{using~(\ref{poisson:f})}\\
&=&\sum_{m \ne 0} \hat q_{k+mN}
= \sum_{m \ne 0}\left ( \hat J_{k+mN} -\hat J_{mN} \right ),\quad\mbox{since $\hat q_k=\hat J_k -\hat J_0$,}\\
&=& \sum_{m \ne 0}\left ( \hat J^{\infty}(2\pi(k+mN)) -\hat J^{\infty}(2\pi mN) \right ),
\quad \mbox{using~(\ref{Pnprela:f})}.
\end{eqnarray*}
So from  \cite{SKB02} it follows that
\[
|\tilde q_k - \hat q_k| \le 2 \hat J^{\infty}\left(\frac{\pi}{h}\right )= C_1(h),
\]
and  $C_1(h)\rightarrow 0$ as $h \rightarrow 0$ when $J^{\infty}$ is smooth enough (if $J^\infty \in L_2(\mathbb{R})$, then $|\hat J^\infty (\xi)|\rightarrow 0$ as $|\xi| \rightarrow \infty$ ~\cite{K.Maleknejad},~\cite[page 30]{LNTre}).
Thus
\[
|e^{\hat q_k t}-e^{\tilde q_k t}|^2 \le t^2 C_1^2(h).
\]
So applying the relation (\ref{finiteinfinite:f}), Parseval's relation and %Theorem 4.8)
Theorem~\ref{soveedef:f}
\begin{eqnarray*}
&&\sum_{|k|\le {\frac{N}{2}}}
\left |\left ( e^{\hat q_k t}-e^{\tilde q_k t}\right )\hat u_0^{\infty}\right |^2
\le t^2 C_1^2(h) \sum_{|k|\le {\frac{N}{2}}}
\left |\hat u_k(0)\right |^2\\
&\le& t^2 C_1^2(h) \left ( |\hat u_0(0)|^2 + \sum_{|k| >0} |k|^{0} \left |\hat u_k(0)\right |^2 \right )\\
&\le& t^2 C_1^2 (h)\|u_0 \|_{*,0}^2\equiv t^2 C_1^2 (h)\|u_0 \|_{H^0(1)}^2=t^2 C_1^2 (h)\|u_0 \|^2.
\end{eqnarray*}
Thus, similar to Section~\ref{defineqqq} there exists a constant $C_2(\sigma)$ such that
\[
\left\|u(x_j,t)-U(x_j,t)\right \|^2
\le t^2 C_1^2 (h)\|u_0 \|^2+C_2^2(\sigma) h ^{2\sigma}
\|u_0 \|_{H^\sigma (1)}^2.
\]
Thus we conclude
\begin{thrm}
If the  semidiscrete approximation (\ref{period02:f}) of the initial
value problem (\ref{f:find-linearide01}) is stable $J, \hat J_k,$ $-\frac{N}{2}+1 \le k\le \frac{N}{2}$ satisfy \textbf{A1}~-~\textbf{A5} and
 $u_{0}\in H^{\sigma}({1})$ with $\sigma>\frac{1}{2},$ then there exist
constants $C_1(h),$ $C_2(\sigma)$ such that
\[
\left\|u(x_j,t)-U(x_j,t)\right \|
\le t C_1(h)\|u_0 \| + C_2(\sigma) h ^{\sigma}
\|u_0 \|_{H^\sigma (1)}.
\]
\end{thrm}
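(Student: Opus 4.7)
The plan is to follow the template of Section~\ref{defineqqq} with the time-discretisation factor $g^m(h,\dt,k)$ replaced everywhere by the exact semidiscrete symbol $e^{\tilde q_k t}$, so that no time-stepping error enters. First I would write the error $u(x_j,t)-U(x_j,t)$ as in~(\ref{Dperiod04:f}), splitting the Fourier series of $u$ into the low frequencies $|k|\le N/2$ and the tail $|k|>N/2$. Parseval's relation together with $|e^{\hat q_k t}|\le 1$ from Lemma~\ref{periodlemma03} then bounds $\|u(x_j,t)-U(x_j,t)\|^2$ by a low-frequency term $\sum_{|k|\le N/2}|\hat u_k(0)e^{\hat q_k t}-e^{\tilde q_k t}\tilde U_k(0)|^2$ plus a high-frequency tail $\sum_{|k|>N/2}|\hat u_k(0)|^2$.

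The second step is to use the Poisson summation formula~(\ref{poisson:f}) to replace $\tilde U_k(0)$ by $\hat u_k^{\infty,0}+\sum_{s\ne 0}\hat u^{\infty,0}(2\pi(k+sN))$. Via~(\ref{finiteinfinite:f}) this decouples the low-frequency sum into a symbol-error piece bounded by $\sum_{|k|\le N/2}|e^{\hat q_k t}-e^{\tilde q_k t}|^2|\hat u_k(0)|^2$ and an aliasing piece bounded by $\sum_{|k|\le N/2}|\sum_{s\ne 0}\hat u^{\infty,0}(2\pi(k+sN))|^2$ (using $|e^{\tilde q_k t}|\le 1$). For the symbol error I would invoke the elementary estimate $|e^a-e^b|\le|a-b|$ valid for $a,b$ with nonpositive real parts (secured by Lemma~\ref{periodlemma03}), giving $|e^{\hat q_k t}-e^{\tilde q_k t}|\le t|\hat q_k-\tilde q_k|$; Poisson summation applied to $J$ as in the calculation preceding the theorem statement yields $|\hat q_k-\tilde q_k|\le 2|\hat J^{\infty}(\pi/h)|=:C_1(h)$, with $C_1(h)\to 0$ as $h\to 0$ by Riemann--Lebesgue whenever $J^{\infty}\in L^2(\mathbb{R})$. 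Summed against $|\hat u_k(0)|^2$ and using Parseval, this contributes $t^2 C_1^2(h)\|u_0\|^2$.

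The third step treats the aliasing piece and the high-frequency tail together via the Sobolev assumption. A Cauchy--Schwarz split of the form
\[
\Bigl|\sum_{s\ne 0}\hat u^{\infty,0}(2\pi(k+sN))\Bigr|^2\le\Bigl(\sum_{s\ne 0}|2\pi(k+sN)|^{2\sigma}|\hat u^{\infty,0}|^2\Bigr)\Bigl(\sum_{s\ne 0}|2\pi(k+sN)|^{-2\sigma}\Bigr)
\]
for $\sigma>1/2$, used exactly as in~(\ref{period11:f}), bounds the aliasing sum by $C(\sigma)h^{2\sigma}\|u_0\|_{H^\sigma(1)}^2$; the tail $\sum_{|k|>N/2}|\hat u_k(0)|^2$ is bounded by $(2/N)^{2\sigma}\sum_{|k|>N/2}|k|^{2\sigma}|\hat u_k(0)|^2$ and hence by the same quantity using Theorem~\ref{soveedef:f}. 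Adding the three bounds and taking square roots produces the stated estimate.

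The main obstacle I expect is not technical but bookkeeping: one must verify that the hypotheses \textbf{A1}--\textbf{A5} are strong enough to secure both nonpositivity of the real parts of $\hat q_k,\tilde q_k$ (needed for the contraction inequality on exponentials) and the decay of $\hat J^{\infty}$ at infinity that makes $C_1(h)\to 0$. The only real simplification compared with Theorem~\ref{f:thrm001222} is that no $\dt$ enters, so the term $C_2\dt$ appearing there is absent here, and the prefactor $t$ survives undifferenced because the time evolution is exact.
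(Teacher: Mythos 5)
Your proposal is correct and follows essentially the same route as the paper's Section~6 argument: the same decomposition (\ref{Dperiod04:f}) into low frequencies and tail, Parseval plus Poisson summation to isolate a symbol-error piece and an aliasing piece, the bound $|e^{\hat q_k t}-e^{\tilde q_k t}|\le t|\hat q_k-\tilde q_k|\le t\,C_1(h)$ via Poisson summation on $J$, and the Cauchy--Schwarz/Sobolev treatment of the aliasing and tail terms exactly as in (\ref{period11:f}). No substantive differences to report.
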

%
%.
\section{Numerical Experiments and discussion}
Here we start by experimenting numerical error  in  the approximation
(\ref{period03:f})  of (\ref{f:find-linearide01}).
We compute numerical error at $t=1$. Figure~\ref{accuracy03:fig} shows
the behaviour of $\|u(\cdot, t_n)-U^n(\cdot)\|_h$ for various choices of $h$
and $\dt$ with smooth and non-smooth $u_0$ for all $x\in [0, 1]$ where
$J^\infty(x)=\sqrt{\frac{10}{\pi}}exp(-10 x^2)$. From Figure~\ref{accuracy03:fig}, we observe that for smooth $u_0$ the rate of convergence of the solutions are faster than for the non-smooth $u_0$.
Here we  also notice that choices of $h$ and $\dt$ have an impact on the rate of convergence.
That agrees with our theoretical estimates (and also  motivates us  to investigate further the theoretical stability and convergence analysis
 of such a one step approximation).
\begin{figure}[t]
\begin{center}
{\bf (a) \hspace{5.5cm} (b)}\\
\includegraphics[width=0.49\textwidth,height=7.5cm]{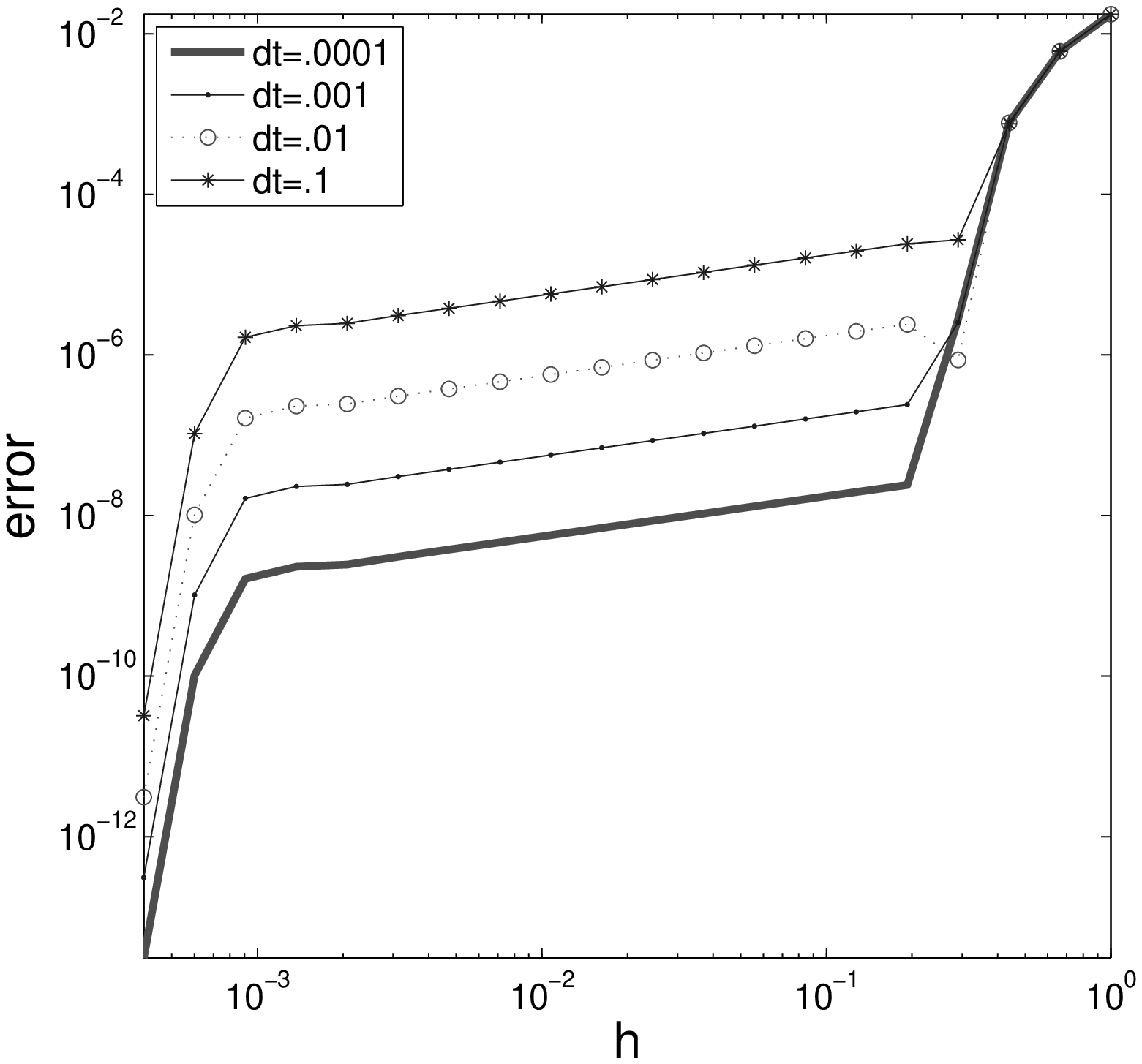}
\includegraphics[width=0.49\textwidth,height=7.5cm]{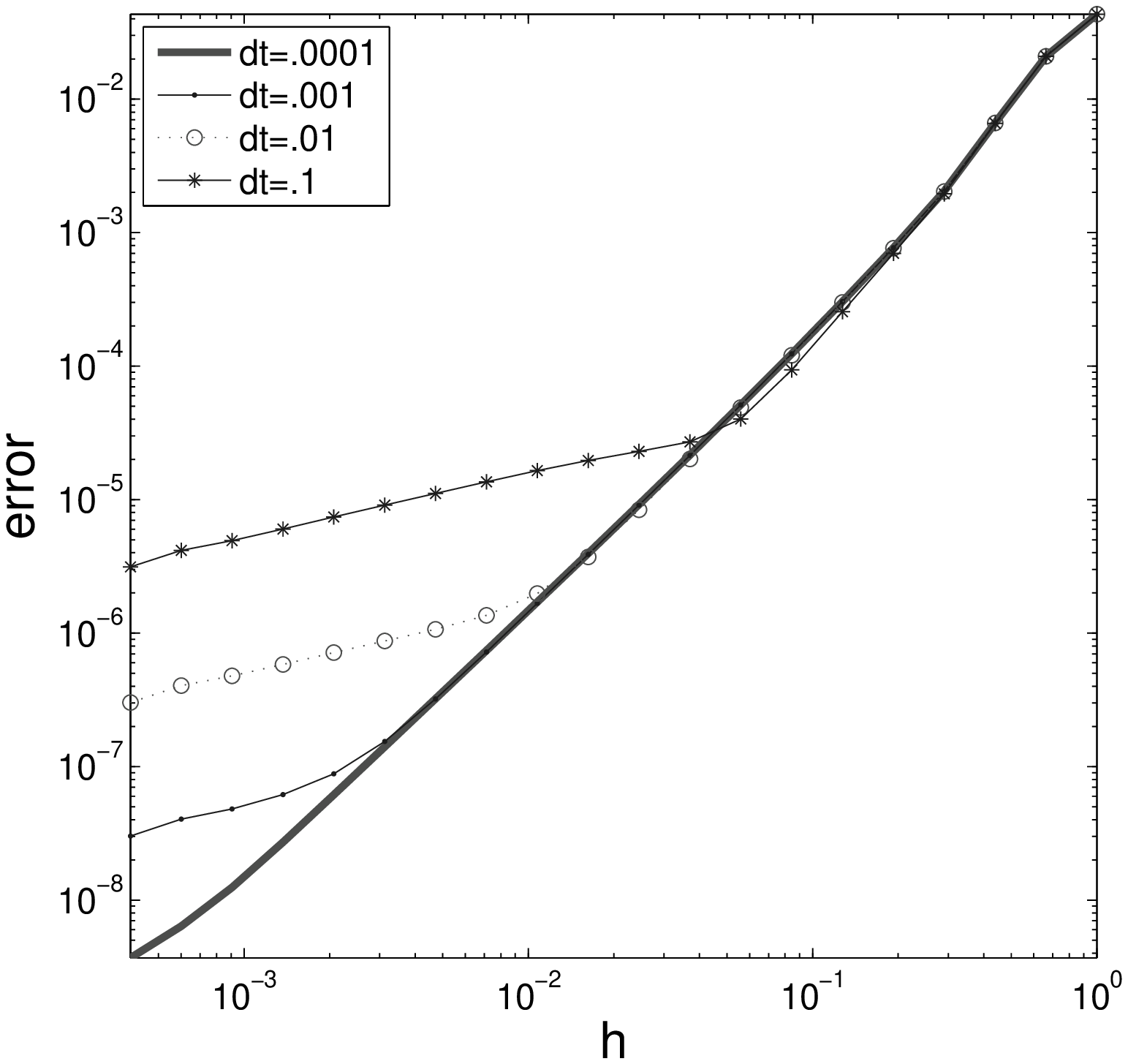}
\caption{Here we plot the error term $\|u(\cdot, t_n)-U^n(\cdot)\|_h$ at $t=1$ by varying $\dt$ and $h$ with
 $J^\infty(x)=\sqrt{\frac{10}{\pi}}e^{-10 x^2}$ and
$(a)$ $u_0 (x)=\sqrt{\frac{1}{\pi}}e^{-(x-\frac{1}{2})^2}$ (left figure),
%(d) $J^\infty(x)=\sqrt{\frac{10}{\pi}}e^{-10 x^2}$ and
     $(b)$ $u_0 (x)=\frac{1}{2}e^{-|x-\frac{1}{2}|}$ (right figure).
}
\label{accuracy03:fig}
\end{center}
\end{figure}
From this study we notice that the full discrete scheme is conditionally stable. The accuracy of the scheme depends on the smoothness of the initial function. We have some limitations in this study. We impose some reasonable restrictions on the kernel function to prove the stability and convergence results. The analysis of higher order schemes in one and multi-dimensions leaves as a future study which is of course more challenging.

%\bibliography{ref}{}
%\bibliographystyle{plain}
%
%\bibliographystyle{abbrv}
%\bibliography{../../Bibfiles/math}

\end {document}